\newtheorem{thm}{Theorem}[section]
\newtheorem{mainthm}{Theorem}
\newtheorem*{cor*}{Corollary}
\newtheorem{question}{Question}
\newtheorem{lem}[thm]{Lemma}
\newtheorem{prop}[thm]{Proposition}
\newtheorem{step}{Step}
\newtheorem{claim}{Claim}
\theoremstyle{definition}
\newtheorem{defn}[thm]{Definition}
\newtheorem*{ack}{Acknowledgment}      
\newcommand{\GG}{{\mathds G}}
\newcommand{\NN}{{\mathds N}}
\newcommand{\QQ}{{\mathds Q}}
\newcommand{\PP}{{\mathds {P}}}
\newcommand{\ZZ}{{\mathds Z}}
\newcommand{\sA}{{\mathcal A}}
\newcommand{\sB}{{\mathcal B}}
\newcommand{\sC}{{\mathcal C}}
\newcommand{\sD}{{\mathcal D}}
\newcommand{\sE}{{\mathcal E}}
\newcommand{\sF}{{\mathcal F}}
\newcommand{\sG}{{\mathcal G}}
\newcommand{\sH}{{\mathcal H}}
\newcommand{\sI}{{\mathcal I}}
\newcommand{\sK}{{\mathcal K}}
\newcommand{\sN}{{\mathcal N}}
\newcommand{\sO}{{\mathcal O}}
\newcommand{\sQ}{{\mathcal Q}}
\newcommand{\sP}{{\mathcal P}}
\newcommand{\sU}{{\mathcal U}}
\newcommand{\sR}{{\mathcal R}}
\newcommand{\sS}{{\mathcal S}}
\newcommand{\sV}{{\mathcal V}}
\newcommand{\sY}{{\mathcal Y}}
\newcommand{\sW}{{\mathcal W}}
\newcommand{\sX}{{\mathcal X}}
\newcommand{\sZ}{{\mathcal Z}}
\newcommand{\rH}{{\mathrm H}}
\newcommand{\rR}{{\mathrm R}}
\newcommand{\rh}{{\mathrm h}}
\newcommand{\rp}{{\mathrm p}}
\newcommand{\ru}{{\mathrm u}}
\newcommand{\rM}{{\mathrm M}}
\newcommand{\rt}{{\mathrm t}}
\newcommand{\rD}{{\mathrm D}}
\newcommand{\fh}{{\mathfrak {h}}}
\newcommand{\fl}{{\mathfrak {l}}}
\newcommand{\mono}{\hookrightarrow}
\newcommand{\epi}{\twoheadrightarrow}
\newcommand{\kk}{{\mathds k}}
\newcommand{\rank}{\operatorname{rk}}
\newcommand{\Hom}{\operatorname{Hom}}
\newcommand{\End}{\operatorname{End}}
\newcommand{\ext}{\operatorname{ext}}
\newcommand{\Ext}{\operatorname{Ext}}
\newcommand{\im}{\operatorname{Im}}
\newcommand{\codim}{\operatorname{codim}}
\newcommand{\coker}{\operatorname{coker}}
\newcommand{\xr}{\xrightarrow}
\newcommand{\ch}{\mathrm{ch}}
\newcommand{\stor}{\mathcal{T}or}
\newcommand{\Ku}{\operatorname{Ku}}
\newcommand{\sext}{\mathcal{E}xt}
\newcommand{\shom}{\mathcal{H}om}
\newcommand{\spanY}{V}
\newcommand{\At}{\mathrm{At}}
\newcommand{\tr}{\mathrm{Tr}}
\newcommand{\Hilb}{\mathrm{Hilb}}
\begin{document}
\bibliographystyle{amsalpha}

\sloppy


\title{Ulrich bundles on cubic fourfolds}
\author{Daniele Faenzi}
\address{Daniele Faenzi.
  Institut de Math{\'e}matiques de Bourgogne, UMR 5584 CNRS,
  Universit{\'e} de Bourgogne et Franche-Comt{\'e}, 9 Avenue Alain
  Savary, BP 47870, 21078 Dijon Cedex, France}
\email{daniele.faenzi@u-bourgogne.fr}

\author{Yeongrak Kim}
\address{Yeongrak Kim. Dept. of Mathematics, Pusan National University, 2 Busandaehak-ro 63beon-gil, Geumjeong-gu, 46241 Busan, Korea}
\email{yeongrak.kim@pusan.ac.kr} 

\thanks{D.F.
  partially supported by ISITE-BFC project contract ANR-lS-IDEX-OOOB and
 EIPHI Graduate School ANR-17-EURE-0002.
 Y.K. was supported by Project I.6 of the SFB-TRR 195
 ``Symbolic Tools in Mathematics and their Application'' of the German
 Research Foundation (DFG), Basic Science Research Program of the NRF of Korea (NRF-2021R1F1A1061140), and Pusan National University Research Grant, 2021. Both authors partially supported by
 F\'ed\'eration de Recherche Bourgogne Franche-Comt\'e Math\'ematiques (FR
 CNRS 2011).
}

\subjclass[2020]{Primary 14J60, 13C14, 14J45, 14J70}

\keywords{Ulrich bundles, cubic fourfolds, Kuznetsov category}

\begin{abstract}
We show the existence of rank 6 Ulrich bundles on a smooth cubic
fourfold. First, we construct a simple sheaf $\mathcal E$ of rank 6 as
an elementary modification of an ACM bundle of rank 6 on a smooth
cubic fourfold. Such an $\mathcal E$ appears as an extension of two
Lehn-Lehn-Sorger-van Straten sheaves. Then we prove that a general
deformation of $\mathcal E(1)$ becomes Ulrich. In particular, this
says that  general cubic fourfolds have Ulrich complexity $6$.

\end{abstract}

\maketitle

\section*{Introduction}

An Ulrich sheaf on a closed subscheme $X$ of $\PP^N$
of dimension $n$ and degree $d$ is a non-zero coherent sheaf $\sF$ on $X$ satisfying $\rH^*(X,\sF(-j))=0$ for $1 \le j \le n$. 
In particular, the cohomology table $\{ h^i (X, \sF(j)) \}$ of $\sF$ is a multiple of the cohomology table of $\PP^n$. It turns out that the reduced Hilbert polynomial $\rp_\sF(t)=\chi(\sF(t))/\rank(\sF)$ of an Ulrich
sheaf $\sF$ must be:
\[
  \ru(t):=\frac{d}{n!}\prod_{i=1}^n(t+i).
\]

Ulrich sheaves first appeared in commutative algebra in the 1980s,
namely, in the form of maximally generated maximal Cohen-Macaulay modules
\cite{Ulr84}. Pioneering work of Eisenbud and Schreyer
\cite{ESW03} popularized them in algebraic geometry in
view of their many connections and applications. Eisenbud
and Schreyer asked whether every projective scheme supports an Ulrich
sheaf. That this should be the case is now called a conjecture of
Eisenbud-Schreyer (see also \cite{ES11}). They also proposed another
question about what is the smallest possible rank of an Ulrich sheaf on
$X$. This is called the \emph{Ulrich complexity} $\mathrm{uc}(X)$ of $X$
(cf. \cite{BES17}).  

Both the Ulrich existence problem and the Ulrich complexity problem
have been elucidated only for a few cases.
We focus here on the case when $X$ is a hypersurface in $\PP^{n+1}$
over an algebraically closed field 
$\kk$ of characteristic different from $2$.
Using the generalized Clifford algebra, Backelin and Herzog proved in
\cite{BH89} that any hypersurface $X$ has an Ulrich sheaf (even in
characteristic 2).
However, their
construction yields an Ulrich sheaf of rank
$d^{\tau(X)-1}$, where $\tau(X)$ is the Chow rank of
$X$ (i.e. the smallest length of an expression of the defining
equation of $X$ as sums of products of $d$ linear forms),
often much bigger than $\mathrm{uc}(X)$.

Looking in more detail at the Ulrich complexity problem for smooth
hypersurfaces of degree $d$ in $\PP^{n+1}$, the situation is well-understood for
arbitrary $n$ only for $d=2$.
Indeed, in this case the only indecomposable Ulrich bundles on $X$ are spinor
bundles, which have rank $2^{\lfloor (n-1)/2 \rfloor}$ \cite{BEH87}. On the
other hand, for $d \ge 3$, the Ulrich
complexity problem is wide open except for a very few
small-dimensional cases.
For instance, any smooth cubic curve or surface
$X$ satisfies $\mathrm{uc}(X)=1$, while for smooth cubic threefolds
$X$ we have $\mathrm{uc}(X)=2$,
(cf. \cite{Bea00, Bea02, LMS15}).

The main goal of this paper is to prove existence of Ulrich bundles $\sU$
defined on any smooth cubic fourfold $X$.
In particular the Chern character $\ch(\sU)$ should lie in $\QQ[H_X]$.
More precisely, 
$\ch(\sU(-1))=k \gamma$, with $\gamma = 3-H_X^2+\frac 14H^4_X$, for
some integer $k>1$. Our main result is:
\begin{mainthm} \label{main1}
  Any smooth cubic fourfold $X$ admits an Ulrich bundle $\sU$
  of rank $6$ with $\ch(\sU(-1))=2\gamma$.
  Hence there is 
  $M : 18\sO_{\PP^5}(-1) \to 18\sO_{\PP^5}$ with $\det(M)=f^6$,
  where $f$ is an equation of $X$.
\end{mainthm}

This allows to settle the Ulrich complexity problem for very general
cubic fourfolds. We know of no pair $(n,d)$ with $n \ge 5$, $d
\ge 4$ such that the Ulrich complexity problem of very general
hypersurfaces of degree $d$ in $\PP^n$ is solved; even for $n \in \{3,4\}$
the problem is open for large $d$.

\begin{cor*}
  If $X$ is a very general cubic fourfold, then $\mathrm{uc}(X)=6$.
\end{cor*}

To explain this, first note that
when $X$ is a smooth cubic fourfold then $X$ does not
support Ulrich bundles of rank $1$, but some $X$ can have an Ulrich
bundle $\sF$ of rank $2$, namely the pfaffian cubic
fourfolds; their moduli space forms a divisor
$\sC_{14} \subset \sC$, so a general cubic fourfold $X$ has
$\mathrm{uc}(X) \ge 3$.
A few more cubic fourfolds which have an Ulrich bundle of rank $3$ or $4$
have been reported very recently by Troung and Yen \cite{TY22}. However, all
these cases are special cubic fourfolds which contain a surface
 not homologous to a complete intersection. Indeed, it turns out that
 the Ulrich complexity of a very general cubic fourfold is divisible by $3$ 
and at least $6$, see \cite{KS20}. On the other hand, a
general cubic fourfold has a rank $9$ Ulrich bundle (cf. \cite{IM14,
  Man19, KS20}).

\medskip

Let us sketch briefly the strategy of the proof of Theorem \ref{main1}.
As a warm-up it, let us review a construction of a rank-$2$
Ulrich bundle on a smooth cubic threefold.
First, starting from a line $L$ contained in the threefold, one
constructs an ACM bundle of rank $2$ having $(c_1, c_2) = (0, L)$.
Such a bundle is unstable
since it has a unique global section which vanishes along $L$.
By choosing a line $L'$ disjoint from $L$, we may take an elementary
modification of it so that we have a simple and semistable sheaf $\sE$
of $(c_1, c_2) = (0,2L)$. The sheaf $\sE$ is not Ulrich, but one can show that its general deformation becomes Ulrich. A similar argument
is used to prove the existence of rank $2$ Ulrich bundles on K3
surfaces \cite{Fae19} and prime Fano threefolds \cite{BF11}.  

For fourfolds, twisted cubics play a central role in the
construction, rather than lines.
Note that twisted cubics in $X$ form a $10$-dimensional family. For each twisted
cubic $C \subset X$, its linear span $\spanY = \langle C \rangle $
defines a linear section $Y \subset X$ which is a cubic surface. When
$Y$ is smooth, the rank-$3$ sheaf $\mathcal G = \ker[3 \mathcal \sO_X
\to \sO_Y(C)]$ is stable.
The family of such stable sheaves of rank $3$ forms an $8$-dimensional
moduli space, which is indeed a very well studied 
smooth hyperk\"ahler manifold \cite{LLSS17, LLMS18}.
These sheaves have been used extensively in \cite{LLMS18,li-pertusi-zhao}.
We will call them Lehn-Lehn-Sorger-van Straten sheaves and the
Lehn-Lehn-Sorger-van Straten eightfold. 

To construct an Ulrich bundle of rank 6, we 
start from a twisted cubic $C \subset X$ and consider a rank 6
vector bundle $\sS$ obtained as fourth syzygy of $\sO_C(5)$. Then we take a
modification of $\sS$ along the cubic surface $Y$ obtained
cutting $X$ with the span of $C$, which one achieves
upon choosing a second twisted cubic $D$ in $Y$ that cuts
$C$ at 2 points.
This affords a sheaf $\sE$ which is certainly not reflexive but has
Chern character $2\gamma$ and enjoys 
almost all cohomology vanishing needed to be an Ulrich sheaf.

As it turns out, $\sE$ is a simple extension of the two
Lehn-Lehn-Sorger-van Straten sheaves of rank 3 associated with the two
twisted cubics $D$ and $C$ -- or rather its \textit{transpose} $C^\rt$, namely
the residual of $C$ in $2H_Y$.
The key point is that the sheaf $\sE$ lies
in the Kuznetsov category $\Ku(X)$ of $X$, \cite{Kuz04}.
This allows to obtain an Ulrich sheaf by taking a generic deformation
$\sF$ of $\sE$
 in the moduli space of simple sheaves over $X$ and using that the cohomology
 vanishing of $\sE$ propagates to $\sF$ by semicontinuity.
 This step relies on
deformation-obstruction theory of the sheaf as developed in \cite{KM09,BLMNPS19} and makes substantial use
of the fact that  $\Ku(X)$ is a K3 category.

Next, we argue about stability of our Ulrich bundles. It is well known
that Ulrich bundles are semistable. Here we prove that the ones we construct are generically stable.
Also, combining our bundles with the ones arising from \cite{IM14}, we
may construct stable Ulrich bundles of arbitrarily high rank.
This provides a higher-dimensional version of
the main results of \cite{CH11,CH12},
in the sense that $X$ is strictly Ulrich wild and thus verifies
\cite[Conjecture 1]{FPL17}. We do not know of other examples
of hypersurfaces of dimension $n \ge 4$ where this conjecture is known
to hold true.

\begin{mainthm} \label{main2}
  Given a smooth cubic fourfold, there is a $26$-dimensional symplectic family of
  stable Ulrich bundles of rank $6$. If $X$ is general enough, then for any
  $k > 1$ there is a $(6k^2+2)$-dimensional symplectic family of stable Ulrich
  bundles $\sU$ on $X$ with $\ch(\sU(-1))= k \gamma$.
\end{mainthm}

The above results can be thought of in terms of moduli of stable
objects of $\Ku(X)$ with respect to a Bridgeland stability condition
$\sigma$ on $\Ku(X)$.
Indeed, in the Mukai lattice of $\Ku(X)$, if we denote by $v_0$ the Mukai
vector of the object $\sG$ arising from a twisted cubic as above, then
the Mukai vector of our Ulrich sheaves is $2v_0$.
Then, our result implies that the moduli space of $\sigma$-stable
objects $\rM_\sigma(2v_0)$ has an irreducible component whose generic point is a
stable Ulrich bundle of rank $6$. 
It is likely that the spaces $\rM_\sigma(k v_0)$ are actually
irreducible for all $k > 1$. However, we are not aware of a proof of
this fact, nor of the answer to the following question.

\begin{question}
  Let $X$ be a smooth cubic fourfold, take $k >1$ and consider the
  Maruyama moduli space $\rM_X(k\gamma)$ of semistable sheaves of rank $k$
  with Chern character $k\gamma$.
  Is the open piece of $\rM_X(k \gamma)$ consisting of Ulrich bundles irreducible?
\end{question}

An intriguing question arises when looking at the space
$\rM_X(3\gamma)$, with $X$ very general. Indeed, the construction of
\cite{IM14} gives rise to Ulrich bundles of rank $9$ by realizing $X$
as a $\PP^5$-linear section of the Cartan cubic in $\PP^{26}$, which is equipped
with an $E_6$-equivariant Ulrich sheaf of rank $9$. 
The choice of the linear section is the equivalence class of a point
of $\GG(6,27)$ up the action of the 78-dimensional group $E_6$.
On a sufficiently general cubic fourfold $X$, this affords a
$28$-dimensional family of stable Ulrich bundles $\sU$ such that
$\sU(-1)$ lies in $\rM_X(3\gamma)$.
On the other hand, the family of stable Ulrich bundles $\sU$ with $\sU(-1)$ in $\rM_X(3\gamma)$ is 
symplectic of dimension $56$. We ask whether the Cartan bundles 
from \cite{IM14} form a Lagrangian subvariety of this family.
Perhaps this can be shown using the fact that rationally connected varieties admit no non-zero 2-form, but we have not been able to prove this rigorously.
Anyway, we do not know how to find a Lagrangian subvariety of $\rM_X(k
\gamma)$ for $k > 3$.

\medskip

The structure of this paper is as follows. In Section
\ref{section:background}, we recall basic notions and develop some
background mainly on Ulrich
bundles, syzygies and matrix factorizations.
In Section \ref{section:twisted cubics}, we introduce an ACM
bundle of rank $6$ which arises as a (higher) syzygy sheaf of a
twisted cubic and review some material on Lehn-Lehn-Sorger-van Straten sheaves as syzygy sheaves. Then
we take an elementary modification to define a strictly semistable
sheaf $\sE$ of rank $6$ whose reduced Hilbert polynomial is 
$\ru(t)$. In Section \ref{section:smoothing}, we show that a general
deformation of $\sE$ is Ulrich. We first prove this claim for cubic
fourfolds which do not contain surfaces of small degrees other than
linear sections. Then we extend it for every smooth cubic fourfold. 
Finally in Section \ref{k} we prove Theorem \ref{main2}.

\medskip
\begin{ack}
We wish to thank F\'ed\'eration Bourgogne Franche-Comt\'e Math\'ematiques FR CNRS 2011 for supporting the visit of Y.K. in Dijon.
We would like to thank Frank-Olaf Schreyer, Emanuele Macr\'i, Laura
Pertusi and Paolo Stellari, for valuable advice and helpful
discussion. We are also grateful to anonymous referees for many useful
remarks.
A part of this work was done while Y. K. was in Universit\"at des Saarlandes. 
We would like to thank the referee for the several suggestions on how
to improve the paper.
\end{ack}

\section{Background} \label{section:background}
Let us collect here some basic material. We work over an algebraically
closed field $\kk$ of characteristic other than $2$.

\subsection{Background definitions and notation}
Consider a smooth connected $n$-dimensional projective subvariety $X
\subseteq \PP^N$ and denote by $H_X$ the hyperplane divisor on
$X$ and $\sO_X(1) = \sO_X(H_X)$. Given a coherent sheaf $\sF$ on $X$
and $t \in \ZZ$, write $\sF(t)$ for $\sF \otimes \sO_X(t H_X)$.
Let $\sF$ be a torsion-free sheaf on $X$.
The reduced Hilbert polynomial of $\sF$ is defined as
\[
\rp_\sF(t) := \frac{1}{\rank(\sF)}\chi(\sF(t)) \in \QQ[t].
\]
Let $\sF, \sG$ be torsion-free sheaves on $X$. We say that $\rp_\sF < \rp_\sG$ if $\rp_\sF(t) < \rp_\sG(t)$ for $t \gg 0$. 
The slope of $\sF$
is defined as:
\[
  \mu(\sF)=\frac{c_1(\sF)\cdot H_X^{n-1}}{\rank(\sF)}.
\]
A torsion-free sheaf $\sF$ on $X$ is \emph{stable}
  (respectively, \emph{semistable, $\mu$-stable, $\mu$-semistable})
  if, for any subsheaf $0 \ne \sF' \subsetneq \sF$, we have:
\[
\rp_{\sF'} < \rp_{\sF}, \qquad \mbox{(respectively, $\rp_{\sF'} \le
  \rp_{\sF}$, $\mu(\sF') < \mu(\sF)$, $\mu(\sF') \le \mu(\sF)$} ).
\]
A polystable sheaf is a direct sum of stable sheaves having the same
reduced Hilbert polynomial.

\subsection{ACM and Ulrich sheaves}
We are mostly interested in coherent sheaves on $X$ which admit nice
minimal free resolutions over $\PP^N$, namely ACM and Ulrich
sheaves. Equivalently, such properties are characterized by cohomology
vanishing conditions as follows: 

\begin{defn}
Let $X \subseteq \PP^N$ be as above, and let $\sF$ be a coherent sheaf
on $X$. Then $\sF$ is:
\begin{enumerate}[label=\roman*)]
\item \emph{ACM} if it is locally Cohen-Macaulay and $\rH^i(X, \sF(j))=0$ for $0<i<n$ and $j \in \ZZ$.
\item \emph{Ulrich} if $\rH^i(X, \sF(-j))=0$ for $i \in \ZZ$ and $1 \le j \le n$. 
\end{enumerate}
\end{defn}
We refer to \cite[Proposition 2.1]{ESW03} for several equivalent
definitions for Ulrich sheaves. In particular, every Ulrich sheaf is
ACM. If $X$ is smooth, then a coherent sheaf is locally Cohen-Macaulay if and only if is locally free. Moreover, for Ulrich sheaves, 
  (semi-)stability is equivalent to  $\mu$-(semi-)stability, see \cite{CH12}.

Let us review some previous works on the existence of Ulrich bundles
on a smooth cubic fourfold $X$, possibly of small rank.
In terms of Hilbert polynomial, an Ulrich bundle $\sU$ satisfies:
\[
  \rp_\sU(t)=\ru(t) := \frac 18 (t+4)(t+3)(t+2)(t+1).
\]

Note that $X$ carries an Ulrich line bundle if and only if it is linearly determinantal, which is impossible since a determinantal hypersurface
is singular along a locus of codimension $3$. $X$ carries a rank $2$
Ulrich bundle if and only if it is linearly pfaffian. Equivalently,
such an $X$ contains a quintic del Pezzo surface \cite{Bea00}. Note that
a pfaffian cubic fourfold also carries a rank $5$ Ulrich
bundle \cite{Man19}. For rank $3$ and $4$, Truong and Yen provided computer-aided construction of a rank $3$ Ulrich bundle on a general element in the moduli of special cubic fourfolds $\mathcal C_{18}$ of discriminant
$18$, and of a rank $4$ Ulrich bundle on a general element in
$\mathcal C_{8}$ \cite{TY22}. 

All the above cases were made over special cubic fourfolds,
i.e., they contain a surface which is not homologous to a
complete intersection. Such cubic fourfolds form a countable union of
irreducible divisors in the moduli space of smooth cubic fourfolds
$\mathcal C$. We refer to \cite{Has00} for the convention and more
details. On a very general cubic fourfold $X$ (so that any surface
contained in $X$ is homologous to a complete intersection), it is easy
to find the following necessary condition on Chern classes of a
coherent sheaf to be Ulrich: 

\begin{prop}[{\cite[Proposition 2.5]{KS20}}]\label{Proposition:KS20}
Let $\sE$ be an Ulrich bundle of rank $r$ on a very general cubic fourfold $X \subset \mathbb{P}^5$. Let $c_i := c_i (\sE(-1))$. Then $r$ is divisible by $3$, $r \ge 6$, and 
\[
c_1 = 0, \qquad c_2 = \frac{1}{3}r H^2, \qquad c_3 = 0, \qquad c_4 = \frac{1}{6}r(r-9).
\]
\end{prop}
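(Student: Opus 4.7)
The plan is to combine Hirzebruch--Riemann--Roch on the cubic fourfold $X$ with the Hodge-theoretic restrictions coming from very-generality. Since $X$ is very general, we have $\operatorname{Pic}(X)=\ZZ\cdot H$ and $H^{2,2}(X,\ZZ) \cap H^{4}(X,\ZZ) = \ZZ\cdot H^2$; as the Chern classes of $\sE(-1)$ are algebraic (Hodge) classes and the groups $H^{6}(X,\QQ)$, $H^{8}(X,\QQ)$ are one-dimensional, this forces
\[
c_1 = aH,\qquad c_2 = bH^2,\qquad c_3 = cH^3,\qquad c_4 = dH^4,
\]
for some rationals $a,b,c,d$ with $a,b\in\ZZ$, reducing the problem to determining four unknowns.

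Next, I would compute both sides of the Hirzebruch--Riemann--Roch identity
\[
\chi(\sE(-1)(t)) = \int_X \ch(\sE(-1))\, e^{tH}\, \mathrm{td}(T_X)
\]
as polynomials in $t$. The Ulrich vanishing $\rH^*(X,\sE(-j))=0$ for $1\le j\le 4$ pins down the left-hand side as $\tfrac{r}{8}t(t+1)(t+2)(t+3)$. On the right, using $c(T_X)=(1+H)^6/(1+3H)$ one obtains
\[
\mathrm{td}(T_X) = 1 + \tfrac{3}{2}H + \tfrac{5}{4}H^2 + \tfrac{3}{4}H^3 + \tfrac{1}{3}H^4,
\]
together with $\int_X H^4 = 3$, yielding an explicit polynomial in $t$ whose coefficients are functions of $r,a,b,c,d$. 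Equating the two expressions coefficient by coefficient gives a triangular linear system: the $t^3$-coefficient forces $a=0$, hence $c_1=0$; the $t^2$-coefficient then forces $b=r/3$, hence $c_2=(r/3)H^2$; the $t$-coefficient forces $c=0$, hence $c_3=0$; and the constant term yields $d = r(r-9)/18$, equivalently the top Chern number $c_4 = 3d = r(r-9)/6$. This recovers the asserted Chern-class formulas.

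The divisibility $3\mid r$ is then immediate from the integrality of $c_2$: since the class $(r/3)H^2$ must lie in $H^{2,2}(X,\ZZ) = \ZZ\cdot H^2$, we need $r/3 \in \ZZ$. Finally, to rule out $r=3$ and obtain $r\ge 6$, I would invoke the elementary fact that any rank-$3$ vector bundle has vanishing fourth Chern class; but the formula would force $c_4 = 3(3-9)/6 = -3 \neq 0$, a contradiction. The main computational obstacle is the bookkeeping for the Hirzebruch--Riemann--Roch expansion, which is lengthy but routine; the key conceptual inputs are the very-general Hodge constraint (reducing to four unknowns) and the rank-versus-top-Chern-class clash at $r=3$ (which sharpens $3\mid r$ to $r\ge 6$).
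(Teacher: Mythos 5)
Your proposal is correct, and it is essentially the argument behind the cited result: the paper states this proposition as an import from \cite[Proposition 2.5]{KS20} without reproving it, and the proof there is exactly your combination of the very-general Hodge constraint (the only integral $(2,2)$-classes being $\ZZ \cdot H^2$, which also gives $3 \mid r$ from integrality of $c_2$), Hirzebruch--Riemann--Roch matched against the forced Ulrich Hilbert polynomial $\chi(\sE(t-1))=\frac{r}{8}t(t+1)(t+2)(t+3)$, and the vanishing of $c_4$ for any rank-$3$ bundle to exclude $r=3$. Your computations check out: $\mathrm{td}(T_X)=1+\frac{3}{2}H+\frac{5}{4}H^2+\frac{3}{4}H^3+\frac{1}{3}H^4$ is correct, and the coefficient-by-coefficient comparison indeed yields the triangular system $a=0$, $b=r/3$, $c=0$, $d=r(r-9)/18$, hence $\int_X c_4 = 3d = \frac{1}{6}r(r-9)$ as claimed.
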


The existence of rank $9$ Ulrich bundles on a general cubic fourfold
$X$ is known according to \cite{IM14,
  Man19, KS20}. Therefore, the Ulrich complexity of a very general
cubic fourfold is either $6$ or $9$.
It is thus natural to ask the question: \textit{Does a smooth cubic fourfold carry an Ulrich bundle of rank $6$}? The goal of this paper is to give a
positive answer to this question.
In particular, the Ulrich complexity $\mathrm{uc}(X)$ of a (very)
general cubic fourfold $X$ is $6$.

\subsection{Reflexive sheaves}
Let $\sE$ be a torsion-free sheaf on a smooth connected projective $n$-dimensional
variety $X$.  The following lemma is standard.

\begin{lem} \label{reflexive}
For each $k \in \{0,\ldots,n-2\}$ there is $p_k \in \QQ[t]$, with
  $\deg(p_k) \le k$ such that:
  \[
    \rh^{k+1}(\sE(-t))=p_k(t), \qquad \rh^0(\sE(-t))=0\qquad  \mbox{for $t \gg 0$}, 
  \]
  Assume $\sE$ is reflexive. Then $\forall k \in \{0,\ldots,n-3\}$ there is $q_k \in \QQ[t]$, with
  $\deg(q_k) \le k$ such that:
  \[
    \rh^{k+2}(\sE(-t))=q_k(t), \qquad \rh^0(\sE(-t))=\rh^1(\sE(-t))=0 \qquad  \mbox{for $t \gg 0$}, 
  \]
  Moreover, $\sE$ is locally free if and only if $p_k=0$ for all $k
  \in \{0,\ldots,n-2\}$, equivalently if $q_k=0$ for all $k  \in \{0,\ldots,n-3\}$.
\end{lem}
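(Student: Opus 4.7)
The plan is to reduce the statement to a combined application of Serre duality and the local-to-global spectral sequence for $\sext$ sheaves. On the smooth projective $n$-fold $X$, Serre duality identifies
\[
h^i(X, \sE(-t)) = \ext^{n-i}(\sE, \omega_X(t)),
\]
and the local-to-global spectral sequence
\[
E_2^{p,q} = H^p(X, \sext^q(\sE, \omega_X)(t)) \Rightarrow \Ext^{p+q}(\sE, \omega_X(t))
\]
degenerates for $t \gg 0$ thanks to Serre vanishing applied term by term to each $\sext^q(\sE, \omega_X)$. This yields the clean identification
\[
\ext^m(\sE, \omega_X(t)) = h^0(X, \sext^m(\sE, \omega_X)(t)) \qquad \text{for } t \gg 0.
\]

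The next step is to bound the dimension of the support of $\sext^q(\sE, \omega_X)$. Since $\omega_X$ is locally free, these supports coincide with those of $\sext^q(\sE, \sO_X)$, and on the regular local rings $\sO_{X,x}$ Auslander--Buchsbaum gives $\mathrm{pd}(\sE_x) = \dim \sO_{X,x} - \mathrm{depth}(\sE_x)$. If $\sE$ is torsion-free, then $\mathrm{depth}(\sE_x) \ge 1$ for every $x$, whence $\codim \Supp \sext^q(\sE, \omega_X) \ge q+1$ for $q \ge 1$ and $\sext^n(\sE, \omega_X) = 0$. If $\sE$ is reflexive, the $S_2$ property raises the depth to $\min(2, \dim \sO_{X,x})$, improving the codimension bound to $q+2$ for $q \ge 1$ and forcing also $\sext^{n-1}(\sE, \omega_X) = 0$. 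Setting
\[
p_k(t) := \chi\bigl(\sext^{n-k-1}(\sE, \omega_X)(t)\bigr), \qquad q_k(t) := \chi\bigl(\sext^{n-k-2}(\sE, \omega_X)(t)\bigr),
\]
the codimension bounds translate directly into $\deg p_k \le k$ and $\deg q_k \le k$ (since the Hilbert polynomial of a coherent sheaf has degree at most the dimension of its support), and the degenerated spectral sequence combined with Serre duality gives $h^{k+1}(\sE(-t)) = p_k(t)$ and $h^{k+2}(\sE(-t)) = q_k(t)$ for $t \gg 0$. The asserted vanishings $h^0(\sE(-t)) = 0$ (torsion-free case) and $h^1(\sE(-t)) = 0$ (reflexive case) follow from the boundary vanishings $\sext^n = 0$ and $\sext^{n-1} = 0$.

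For the last assertion, observe that if all $p_k$ vanish identically, then $h^0(\sext^{n-k-1}(\sE, \omega_X)(t)) = 0$ for $t \gg 0$ and each $k \in \{0, \ldots, n-2\}$; since any nonzero coherent sheaf acquires global sections after a large enough twist, this forces $\sext^q(\sE, \omega_X) = 0$ for every $q \in \{1, \ldots, n-1\}$, which together with the automatic vanishing $\sext^n(\sE, \omega_X) = 0$ yields $\sext^q(\sE, \sO_X) = 0$ for all $q \ge 1$, i.e.\ $\sE$ is locally free. The converse is obvious, and the reflexive case is identical, relying additionally on $\sext^{n-1} = 0$. I do not anticipate a genuinely hard step here: the whole argument is essentially bookkeeping around Auslander--Buchsbaum codimension bounds, and the only care required is to keep straight the index shift $q \leftrightarrow n-i$ between $h^i(\sE(-t))$ and $\sext^{q}(\sE, \omega_X)$.
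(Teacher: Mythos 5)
Your proposal is correct and follows essentially the same route as the paper: Serre duality plus the local-to-global spectral sequence, Serre vanishing to isolate the $E_2^{0,q}$ term for $t\gg 0$, and the codimension bounds $\codim\Supp\sext^q(\sE,\omega_X)\ge q+1$ (resp.\ $q+2$) for torsion-free (resp.\ reflexive) $\sE$. The only cosmetic difference is that you derive those bounds directly from Auslander--Buchsbaum and depth considerations, whereas the paper cites \cite[Proposition 1.1.10]{HL10}, which encapsulates exactly that argument.
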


\begin{proof}
  Given positive integers $p,q$ with $p+q \le n$, Serre duality and the
  local-global spectral sequence give, for all $t \in \ZZ$:
  \begin{equation}
    \label{SS}
    \rH^{n-p-q}(\sE(-t))^\vee \simeq \Ext^{p+q}_X(\sE,\omega_X(t))
    \Leftarrow \rH^p(\sext_X^q(\sE,\omega_X) \otimes \sO_X(t)) = E_2^{p,q}.
  \end{equation}

  For $t \gg 0$ and $p > 0$ we have $\rH^p(\sext_X^q(\sE,\omega_X)
  \otimes \sO_X(t))=0$ by Serre vanishing. Then:
  \[
    \rh^{n-q}(\sE(-t)) = \rh^0(\sext_X^q(\sE,\omega_X) \otimes
    \sO_X(t)), \qquad \mbox{for $t \gg 0$}.
  \]
  Hence $\rh^{n-q}(\sE(-t))$ is a rational polynomial function of
  $t$ for $t \gg 0$.
  By \cite[Proposition 1.1.10]{HL10}, since $\sE$ is torsion-free, for $q \ge 1$ we have:
  \[
    \codim(\sext_X^q(\sE,\omega_X)) \ge q+1,
  \]
  while when $\sE$ is reflexive, for $q \ge 1$:
  \[
    \codim(\sext_X^q(\sE,\omega_X)) \ge q+2.
  \]
  Thus, for $t \gg 0$, the degree of the polynomial function
  $\rh^{n-q}(\sE(-t))$ is at most $n-q-1$, actually at most $n-q-2$ if
  $\sE$ is reflexive.

  Finally, $\sE$ is locally free if and only if
  $\sext_X^q(\sE,\omega_X)=0$ for all $q>1$. Since this happens if and
  only if ${\rh^0(\sext_X^q(\sE,\omega_X) \otimes \sO_X(t)) =0}$ for $t \gg
  0$, the last statement follows.
\end{proof}

\subsection{Minimal resolutions and syzygies} 
We recall some notions from commutative algebra.
Let $R = \kk [x_0,\cdots, x_N]$ be a polynomial ring over a field $\kk$ with the
standard grading, and let $R_X = R/I_X$ be the homogeneous coordinate
ring of $X$ where $I_X$ is the ideal of $X$. Let $\Gamma$ be a
finitely generated graded $R_X$-module.
The minimal free resolution of $\Gamma$ over $R_X$ is constructed by
choosing minimal generators of $\Gamma$ of degrees
$(a_{0,0},\ldots,a_{0,r_0})$ so that there is a surjection 
\[
F_0 = \bigoplus_{j=0}^{r_0} R_X (-a_{0,j}) \epi \Gamma.
\]
Taking a minimal set of generators of degrees
$(a_{1,0},\ldots,a_{1,r_1})$ of its kernel we get a minimal
presentation of $\Gamma$ of the form $F_1 = \bigoplus_{j=0}^{r_1}
R_X(-a_{1,j}) \to F_0$. Repeating this process, we have a free resolution of $\Gamma$:
\[
F_{\bullet} (\Gamma) : \cdots \to F_i \stackrel{d_i} \longrightarrow
F_{i-1} \stackrel{d_{i-1}} \longrightarrow \cdots \longrightarrow F_1
\stackrel{d_1} \longrightarrow F_0 \to \Gamma \to 0, \qquad
\mbox{with:} \qquad F_i = \bigoplus_{j=0}^{r_i} R_X (-a_{i,j}).
\]
Note that the resolution obtained this way is minimal, i.e. $d_i
\otimes_{R_X} \kk = 0$ for every $i$, and is unique up to homotopy,
see \cite[Corollary 1.4]{Eis80}. In general, it has infinitely many terms.

We define the \textit{minimal resolution} of a coherent sheaf $\sF$ on
$X$ as the sheafification of the minimal graded free resolution of its
module of global sections $\Gamma_{*}(\sF) = \bigoplus_{j \in \ZZ} \Gamma(X, \sF (j))$, provided that this is finitely
generated. In this case, for $i \in \NN$, we call \textit{i-th syzygy}
of $\sF$ the sheafification of $\im(d_i)$ and we denote this by
$\Sigma^X_i(\sF)$. Of course for positive $j$ we have $\Sigma^X_{i+j}(\sF) \simeq \Sigma^X_j\Sigma^X_i(\sF)$.

\subsection{Matrix factorizations and ACM/Ulrich sheaves}
We recall the notion of matrix factorization which is introduced by Eisenbud \cite{Eis80} to study free resolutions over hypersurfaces. 

\begin{defn}
Let $X \subseteq \PP^N$ be a hypersurface defined by a homogeneous
polynomial $f$ of degree $d$, and let $\sF$ and $\sG$ be two finite direct sums
of line bundles. A pair of morphisms $\varphi : \sF \to \sG$ and $\psi : \sG (-d) \to \sF$ is called a \emph{matrix factorization} of $f$ (of $X$) if
\[
\varphi \circ \psi = f \cdot id_{\sG(-d)}, \qquad \psi (d) \circ \varphi = f \cdot id_{\sF}.
\]
\end{defn}

Matrix factorizations have a powerful application to ACM/Ulrich bundles as follows:

\begin{prop}[{\cite[Corollary 6.3]{Eis80}}]
The association 
\[
(\varphi, \psi) \mapsto M_{(\varphi, \psi)} := \coker \varphi
\]
induces a bijection between the set of equivalence classes of reduced matrix factorizations of $f$ and the set of isomorphism classes of indecomposable ACM sheaves. In particular, when $(\varphi, \psi)$ is completely linear, that is, $\varphi : \sO_{\PP^N}(-1)^{\oplus t} \to \sO_{\PP^N}^{\oplus t}$ for some $t \in \ZZ$ then the corresponding sheaf is Ulrich.
\end{prop}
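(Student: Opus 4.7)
The plan is to prove the stated bijection in both directions, then specialize to obtain the Ulrich refinement.

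First I would go from a matrix factorization to a sheaf. Given $(\varphi,\psi)$ with $\varphi\circ\psi = f\cdot\id_{\sG(-d)}$, set $M := \coker(\varphi)$. The relation $\varphi\circ\psi = f\cdot\id$ shows that multiplication by $f$ on $\sG$ factors through $\im(\varphi)$, so $f\cdot M = 0$ and $M$ is naturally an $R_X$-module, where $R_X = R/(f)$ and $R = \kk[x_0,\ldots,x_N]$. The short exact sequence $0\to\sF\xrightarrow{\varphi}\sG\to M\to 0$ is a length-$1$ resolution of $M$ by free $R$-modules, and the Auslander-Buchsbaum formula then forces $\mathrm{depth}_R(M) = N = \dim X + 1$, i.e.\ $M$ is a maximal Cohen-Macaulay $R_X$-module. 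Translating via the standard dictionary between local cohomology at the irrelevant ideal and sheaf cohomology on $X$, this is equivalent to $\rH^i(X,\widetilde M(j))=0$ for $0<i<n$ and all $j\in\ZZ$, so $\widetilde M$ is ACM.

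For the reverse direction, starting from an indecomposable ACM sheaf $\sG$ on $X$, I would consider the minimal graded free resolution of $\Gamma_*(\sG)$ over $R$. Auslander-Buchsbaum yields projective dimension $1$, so there is $0\to\sF_1\xrightarrow{\varphi}\sF_0\to\Gamma_*(\sG)\to 0$ with $\sF_i$ free. Since $f$ annihilates $\Gamma_*(\sG)$, the map $f\cdot\id_{\sF_0}$ is null-homotopic and lifts uniquely, via injectivity of $\varphi$, to $\psi:\sF_0(-d)\to\sF_1$ satisfying $\varphi\circ\psi = f\cdot\id$. The identity $\varphi\circ\psi\circ\varphi = f\cdot\varphi$ together with injectivity of $\varphi$ gives the second matrix factorization relation $\psi(d)\circ\varphi = f\cdot\id$. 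Minimality of the chosen resolution corresponds precisely to $(\varphi,\psi)$ being reduced, and the two constructions are inverse to each other up to the natural equivalence on matrix factorizations (simultaneous change of bases on $\sF_0$ and $\sF_1$).

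The hard part, which I expect to be the main obstacle, is matching indecomposability of $\widetilde M$ with indecomposability of the matrix factorization $(\varphi,\psi)$. One direction is immediate: a direct-sum decomposition of $(\varphi,\psi)$ induces one of $M = \coker(\varphi)$. The converse requires a Krull-Schmidt style argument based on uniqueness up to isomorphism of the minimal free resolution, together with the fact that idempotents of $M$ lift to idempotents of the two-term complex $(\sF_1\xrightarrow{\varphi}\sF_0)$; this lifting uses the minimality condition (equivalently, the reducedness of $(\varphi,\psi)$) in an essential way.

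Finally, for the Ulrich statement, when $\varphi:\sO_{\PP^N}(-1)^{\oplus t}\to\sO_{\PP^N}^{\oplus t}$ is linear I would twist the resolution $0\to\sO_{\PP^N}(-1)^{\oplus t}\to\sO_{\PP^N}^{\oplus t}\to M\to 0$ by $\sO_{\PP^N}(-j)$ for $1\le j\le n = N-1$ and run the associated long exact sequence. Since $\rH^i(\PP^N,\sO_{\PP^N}(k))=0$ for every $i$ whenever $-N\le k\le -1$, both outer terms vanish in the required range, forcing $\rH^i(X,M(-j))=0$ for all $i$ and all $1\le j\le n$; this is exactly the Ulrich condition.
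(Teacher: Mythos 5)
The paper offers no proof of this proposition --- it is imported verbatim from \cite[Corollary 6.3]{Eis80} --- so there is nothing internal to compare against; your argument is essentially Eisenbud's original one and is correct (Auslander--Buchsbaum to force projective dimension one over the polynomial ring, the null-homotopy for multiplication by $f$ to manufacture $\psi$ with $\varphi\circ\psi=f\cdot\id$ and then $\psi(d)\circ\varphi=f\cdot\id$ by injectivity of $\varphi$, idempotent lifting along the minimal resolution for the indecomposability matching, and the twisted linear presentation plus vanishing of all cohomology of $\sO_{\PP^N}(k)$ for $-N\le k\le -1$ for the Ulrich refinement). The only imprecision, inherited from the paper's own loose phrasing of the statement, is that the bijection is properly between reduced matrix factorizations and maximal Cohen--Macaulay modules with no free summand, restricting to indecomposables on both sides only after setting aside the trivial factorization $(f,1)$ whose cokernel is $\sO_X$; your identification of reducedness with minimality of the resolution is where this caveat lives, and is otherwise handled correctly.
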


\subsection{Twisted cubics and Lehn-Lehn-Sorger-van Straten eightfold}\ Let us briefly recall how can we construct a rank 2 Ulrich bundle on a cubic threefold $X$ via deformation theory. If there is such an Ulrich bundle $\sF$, then $\sF(-1)$ must have the Chern classes $(c_1, c_2) = (0, 2)$ by Riemann-Roch. Note that $X$ has an ACM bundle $\sF_1$ of rank $2$ with $(c_1, c_2) = (0,1)$ which fits into the following short exact sequence
\[
0 \to \sO_X \to \sF_1 \to \sI_{\ell} \to 0
\]
where $\ell \subset X$ is a line. We see that $\sF_1$ is unstable due
to its unique global section. We can take an elementary modification
with respect to $\sO_{\ell^{\prime}}$ where $\ell^{\prime} \subset X$
is a line disjoint to $\ell$. The resulting sheaf $\sF_2 := \ker
\left[ \sF_1 \to \sO_{\ell^{\prime}} \right]$ is simple, strictly
semistable, and non-reflexive. One can check that its general
deformation is stable and locally free, and becomes Ulrich after
twisting by $\sO_X(1)$. One major difference between the case of cubic
threefolds and fourfolds is that not lines but twisted cubics play a significant role both in finding an ACM bundle (of same $c_1$ as Ulrich) and taking an elementary modification. 

Let $X \subset \PP^5$ be a smooth cubic fourfold which does not contain a plane, and let $M_3 (X)$ be the irreducible component of the Hilbert scheme of $X$ containing the twisted cubics. Then $M_3(X)$ is a smooth irreducible projective variety of dimension $10$ \cite[Theorem A]{LLSS17}. Let $C$ be a twisted cubic contained in $X$, and $V \simeq \PP^3$ be its linear span. According to \cite{LLSS17}, the natural morphism $C \mapsto V \in Gr(4,6)$ factors through a smooth projective eightfold $\sZ^{\prime}$ so that $M_3(X) \to \sZ^{\prime}$ is a $\PP^2$-fibration. In $\sZ^{\prime}$, there is an effective divisor coming from non-CM twisted cubics on $X$ which induces a further contraction $\sZ^{\prime} \to \sZ$ so that $\sZ$ is a smooth hyperk\"ahler eightfold which contains $X$ as a Lagrangian submanifold, and the map $\sZ^{\prime} \to \sZ$ is the blow-up along $X$ \cite[Theorem B]{LLSS17}. The variety $\sZ$ is called the Lehn-Lehn-Sorger-van Straten eightfold.

We are interested in a moduli description of
$\sZ^{\prime}$. Let $Y := V \cap X$ be a cubic surface containing
$C$. The sheaf $\sI_{C/Y} (2)$ is indeed an Ulrich line bundle on $Y$, and hence it fits into the following short exact sequence 
\[
0 \to \sG_C \to 3 \sO_X \to \sI_{C/Y}(2) \to 0.
\]
Lahoz, Lehn, Macr\`i and Stellari showed that the sheaf $\sG_C$ is
stable, and the moduli space of Gieseker stable sheaves with the same Chern character is isomorphic to $\sZ^{\prime}$ \cite{LLMS18}. Since we are only interested in general CM twisted cubics and corresponding Lehn-Lehn-Sorger-van Straten sheaves, we may regard that a general point of the Lehn-Lehn-Sorger-van Straten eightfold $\sZ$ corresponds to a rank 3 sheaf $\sG_C$, where $C$ is a CM twisted cubic on $X$, even when $X$ potentially contains a plane.

\section{Syzygies of twisted cubics} \label{section:twisted cubics}

Let $X \subset \PP^5$ be a smooth cubic fourfold. 

\subsection{Twisted cubics and $6$-bundles}
\label{Subsection:construction of S}

Here we show that taking the fourth syzygy of the structure sheaf of
a twisted cubic $C$ is a vector
bundle of rank $6$ which admits a trivial subbundle of rank
$3$. Factoring out this quotient gives back the second syzygy of
$C$, with a degree shift. We will use this filtration later on.

\begin{prop} \label{syzygy of twisted}
  Let $C \subset X$ be a twisted cubic, $\spanY$ its linear span and set $Y=X\cap \spanY$. Put:
  \[
    \sS=\Sigma^X_4(\sO_C(5)), \qquad
     \sG_C=\Sigma^X_1(\sI_{C/Y}(2)).
  \]
  Then $\sS$ is an ACM sheaf of rank $6$ on $X$ with:
  \[
    \rp_\sS(t)=\frac 1 8 (t+2)^2(t+1)^2, \qquad \rH^*(\sS(-1))=\rH^*(\sS(-2))=0.
  \]
  Moreover, $\rh^0(X,\sS)=3$ and there is an exact sequence:
  \begin{equation}
    \label{hatSO}
    0 \to 3\sO_X \to \sS \to \sG_C \to 0.    
  \end{equation}
  In particular, we have:
  \[
  { \ch(\sG_C) = \gamma = 3-H_X^2 + \frac{1}{4} H_X^4, \qquad \ch(\sS)= 6 - H_X^2 + \frac{1}{4} H_X^4.}
  \]
\end{prop}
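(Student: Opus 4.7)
The plan is to compute the minimal free $R_X$-resolution of $\sO_C(5)$ through four steps in order to extract $\sS=\Sigma^X_4(\sO_C(5))$ and then read off its properties. The geometric setup $C \subset Y \subset X$ with $Y = V \cap X$ yields the structural exact sequence
\[
0 \to \sI_{C/Y}(5) \to \sO_Y(5) \to \sO_C(5) \to 0,
\]
whose two outer sheaves admit tractable $R_X$-resolutions. The sheaf $\sO_Y$ is a complete intersection in $X$ cut out by the two linear forms $l_1, l_2$ defining $V \subset \PP^5$, so has a length-two Koszul $R_X$-resolution. The sheaf $\sI_{C/Y}(2)$ is an Ulrich line bundle on the cubic surface $Y \subset V \simeq \PP^3$, hence corresponds to a $3 \times 3$ linear matrix factorization of $f|_V$ over $R_V$; combining this matrix factorization with the Koszul complex on $l_1, l_2$ via Shamash produces an (infinite) $R_X$-resolution of $\sI_{C/Y}(2)$, and hence of $\sI_{C/Y}(5)$ after twist. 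A mapping cone of the induced map of these two resolutions is then a (possibly non-minimal) $R_X$-resolution of $\sO_C(5)$, from which $\sS$ is extracted after four syzygy steps.

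Once the resolution is in hand, the bulk of the claims reduce to bookkeeping. The ACM property is a standard depth argument: $\sO_C(5)$ has depth $2$ over $R_X$, each syzygy step raises the depth by at least one, so the fourth syzygy reaches depth $5 = \dim R_X$ and is maximal Cohen-Macaulay, hence a locally free ACM sheaf on the smooth $X$. The rank $6$ and the reduced Hilbert polynomial $\frac{1}{8}(t+2)^2(t+1)^2$ follow from alternating-sum accounting of ranks and degree shifts of the free summands in the minimal resolution together with Riemann-Roch. The vanishings $\rH^i(\sS(-1)) = \rH^i(\sS(-2)) = 0$ for $0 < i < 4$ are automatic from ACM-ness; the cases $i=0,4$ come from inspecting the allowed degree shifts of the free modules at positions $F_3, F_4$ in the minimal resolution (forcing $\rH^0(\sS(-1)) = \rH^0(\sS(-2)) = 0$) combined with Serre duality applied to $\sS^\vee$.

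The delicate point is the extension sequence $0 \to 3\sO_X \to \sS \to \sG_C \to 0$. My strategy is to split, inside the mapping cone, the contribution to $\sS$ coming from the Koszul resolution of $\sO_Y(5)$ from the contribution coming from the matrix-factorization resolution of $\sI_{C/Y}(5)$. After four syzygy steps the Koszul contribution collapses to a trivial rank-$3$ summand $3\sO_X \subset \sS$, which is exactly the subsheaf generated by the three global sections witnessing $\rh^0(\sS) = 3$. The matrix-factorization contribution is $\Sigma^X_3(\sI_{C/Y}(5))$, and by the $2$-periodicity of the matrix factorization of $\sI_{C/Y}(2)$ on the cubic surface $Y$ (which shifts degrees by $3$, matching precisely the twist from $\sI_{C/Y}(2)$ to $\sI_{C/Y}(5)$), this syzygy identifies with $\Sigma^X_1(\sI_{C/Y}(2)) = \sG_C$, giving the quotient. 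The main obstacle will be tracking cancellations and degree shifts in the mapping cone precisely enough to ensure that the Koszul part collapses to exactly $3\sO_X$ and not to a twisted or enlarged variant, and that the remaining quotient matches $\sG_C$ on the nose; the Ulrich property of $\sI_{C/Y}(2)$ and the resulting linearity of its matrix factorization are the structural inputs that make this identification clean.
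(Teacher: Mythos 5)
Your overall architecture is the same as the paper's: resolve $\sO_C$ over $R_X$ by combining the linear matrix factorization of the cubic surface $Y$ attached to the Ulrich line bundle $\sI_{C/Y}(2)$ with the Koszul complex on the two linear forms cutting out $\spanY$, then read off rank, Hilbert polynomial, ACM-ness and generator degrees from the resulting Shamash-type resolution; those parts are fine. But the step carrying the real content of the proposition --- the sequence $0 \to 3\sO_X \to \sS \to \sG_C \to 0$ --- is set up incorrectly in two ways. First, in the mapping cone for $0 \to \sI_{C/Y}(5) \to \sO_Y(5) \to \sO_C(5) \to 0$ the Koszul resolution of $\sO_Y(5)$ has length two, so it contributes nothing in homological degree $\ge 3$ and is invisible in the fourth syzygy: the trivial summand $3\sO_X$ cannot come from there. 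What the mapping cone actually gives is $\sS = \Sigma^X_4(\sO_C(5)) \simeq \Sigma^X_3(\sI_{C/Y}(5))$, i.e.\ your ``matrix-factorization contribution'' is all of $\sS$, not its quotient. Second, the identification $\Sigma^X_3(\sI_{C/Y}(5)) \simeq \Sigma^X_1(\sI_{C/Y}(2))$ ``by $2$-periodicity'' is false: periodicity of syzygies holds over the hypersurface $Y \subset \spanY \simeq \PP^3$ (equivalently over $R_Y$), not over $R_X$, where $Y$ has codimension $2$ and the resolution of $\sI_{C/Y}$ keeps growing. Numerically, $\Sigma^X_3(\sI_{C/Y}(5))$ has rank $6$ while $\sG_C = \Sigma^X_1(\sI_{C/Y}(2))$ has rank $3$, so they cannot be isomorphic.

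What is missing is precisely the mechanism that produces the $3\sO_X$: each exchange of a syzygy over $X$ for a syzygy over $Y$, i.e.\ each step $\Sigma^X_{i+1}\Sigma^Y_j \to \Sigma^X_i\Sigma^Y_{j+1}$, costs an extension by a free module equal to the top Koszul term tensored with the relevant free term of the matrix factorization (the kernels $K_2$, $F_1 \otimes K_2$, $F_2 \otimes K_2$ in the paper's proof, see \eqref{greater3} and \eqref{greater2}). Carrying this out gives $\Sigma^X_4(\sO_C(5)) \simeq \Sigma^X_2\Sigma^Y_2(\sO_C(5))$ together with the non-split extension $0 \to (F_2\otimes K_2)(5) \to \sS \to \Sigma^X_1(\Sigma^Y_3(\sO_C)(5)) \to 0$, and only at this inner stage does the $Y$-periodicity $\Sigma^Y_3(\sO_C) \simeq \sI_{C/Y}(-3)$ enter to identify the quotient with $\sG_C$. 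As written, your splitting would conclude either that $\sS$ has rank $3$ or that $\sS \simeq 3\sO_X \oplus \sG_C$; the latter is also false, since $\sS$ is locally free while $\sext^1_X(\sG_C,\sO_X) \simeq \sO_Y(C) \ne 0$. So the proposal does not yet prove the extension sequence; it needs the chain of horseshoe diagrams (or an equivalent explicit description of the Shamash differential on the bigraded terms $F_i \otimes K_j$) to be completed.
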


To keep notation lighter, we remove the subscript $C$ from $\sG_C$ so we just write $\sG$, as soon as no confusion occurs, i.e. until \S \ref{second conic}.

\begin{proof}
  For the sake of this proof, for any integer $i$ we omit writing
  $\sO_C$ from expressions of the form $\Sigma_i^X(\sO_C)$ and
  $\Sigma_i^Y(\sO_C)$, so that for instance:
  \begin{equation}
    \label{SigmaI}
    \Sigma_1^X \simeq \sI_{C/X}, \qquad
    \Sigma_1^Y \simeq \sI_{C/Y}.
  \end{equation}

  The curve $C$ is Cohen-Macaulay of degree $3$ and arithmetic genus
$0$, its linear span $\spanY$ is a $\PP^3$, and the linear section $Y$ is a cubic surface equipped with the Ulrich line bundle $\sI_{C/Y}(2)$.
Hence, we have a linear resolution on $\spanY$:
\begin{equation}
  \label{eq:M}
    0 \to 3\sO_\spanY(-3) \xr{M} 3\sO_\spanY(-2) \to \sI_{C/Y} \to 0,  
\end{equation}
  where $M$ is a matrix of linear forms whose determinant is an
  equation of $Y$ in $\spanY$. Put $G_1=3\sO_Y(-2)$ and
  $G_2=3\sO_Y(-3)$.
  Thanks to \cite[Theorem 6.1]{Eis80},
  taking the adjugate matrix $M'$ of $M$ forms a matrix factorization $(M, M')$
  of $Y$ which provides the following $2$-periodic resolution on $Y$
  (we still denote by $M, M'$  the reduction of $M$ and $M'$ modulo $Y$):
  \begin{equation}
    \label{infiniteY}
    \cdots \xr{M'} G_2(-3) \xr{M} G_1(-3) \xr{M'} G_2 \xr{M} G_1 \to \sI_{C/Y} \to 0.
  \end{equation}
  This gives, for all $i \in \NN$:
  \begin{equation}
    \label{periodic}
    \Sigma^Y_{2i+1} \simeq \sI_{C/Y}(-3i).
  \end{equation}
  Next,  set $K_0 = \sO_X$, $K_1=2\sO_X(-1)$, $K_2 = \sO_X(-2)$ and write the Kozsul resolution:
  \begin{equation}
    \label{koszulY}
    0 \to K_2 \to K_1 \to K_0 \to \sO_Y \to 0.
  \end{equation}
  Now we look at the exact sequence:
  \begin{equation}
    \label{ideals}
    0 \to \sI_{Y/X} \to \sI_{C/X} \to \sI_{C/Y} \to 0.    
  \end{equation}

  Set $F_1=3\sO_X (-2)$, $F_2=3\sO_X(-3)$. 
  Now we proceed in two directions. On one hand, the composition
  $F_1 \to G_1 \to \Sigma_1^Y$ lifts to $F_1 \to \Sigma_1^X$ to give
  a diagram (we
  omit zeroes all around for brevity):
  \[
    \xymatrix@-2ex{
      K_2 \ar[d]  \ar[r] & K_1 \ar[d] \ar[r] & \sI_{Y/X} \ar[d]\\
      \Sigma_2^X \ar[d] \ar[r] & F_1 \oplus K_1 \ar[d] \ar[r] & \Sigma_1^X \ar[d] \\
      \Sigma_1^X\Sigma_1^Y\ar[r] &F_1 \ar[r] & \Sigma_1^Y
    }
  \]

{The leftmost column of the above diagram says that
  $\Sigma_1^X\Sigma_1^Y$ and $\Sigma_2^X$ agree up to the free factor
  $K_2$ and therefore their higher syzygies are the same.
  More precisely we have $\Gamma_*(K_2) \simeq R_X(-2)$, so
  the sheafified minimal resolutions of $\Sigma_2^X$
  and $\Sigma_1^X\Sigma_1^Y$ over $X$ differ only by the term $\sO_X(-2)$ in
  degree 0, so that
  $\Sigma_3^X \simeq \Sigma_2^X\Sigma_1^Y$ and in turn
  $\Sigma_{i+1}^X \simeq \Sigma_i^X \Sigma_1^Y$ for all $i \ge 2$.
  Summing up we have:
  \begin{equation}
    \label{greater3}
    0 \to K_2 \to \Sigma_2^X \to \Sigma_1^X\Sigma_1^Y \to 0, \qquad
    \Sigma_{i+1}^X \simeq \Sigma_i^X\Sigma_1^Y, \qquad \forall i \ge 2.
  \end{equation}}

  Next, \eqref{ideals},
  \eqref{SigmaI} and \eqref{infiniteY}
induce a diagram 
  \[
    \xymatrix@-2ex{
      F_1 \otimes \sI_{Y/X} \ar@{=}[d]\ar[r] & \Sigma_1^X \Sigma_1^Y
      \ar[r] \ar[d] &
      \Sigma_2^Y\ar[d] \\
      F_1 \otimes \sI_{Y/X} \ar[r] & F_1 \ar[r] \ar[d] & G_1 \ar[d]\\
      & \Sigma_1^Y \ar@{=}[r] & \Sigma_1^Y
    }
  \]
  This in turn gives the exact sequence
  \begin{equation}
    \label{hatS}
    0 \to F_1 \otimes \sI_{Y/X} \to \Sigma_1^X\Sigma_1^Y \to   \Sigma_2^Y \to 0.   \end{equation}

  Lifting $F_2 \to \Sigma_2^Y$ to $F_2 \to \Sigma_1^X\Sigma_1^Y$,
  we get the exact diagram:
  \[
    \xymatrix@-2ex{
      F_1 \otimes K_2 \ar[r] \ar[d]  &F_1 \otimes K_1 \ar[d] \ar[r] & F_1 \otimes \sI_{Y/X} \ar[d] \\
      \Sigma_2^X\Sigma_1^Y  \ar[d] \ar[r] & F_1 \otimes K_1 \oplus F_2
      \ar[r] \ar[d] & \Sigma_1^X\Sigma_1^Y \ar[d] \\
      \Sigma_1^X\Sigma_2^Y \ar[r] & F_2 \ar[r]&   \Sigma_2^Y.
    }
  \]

  Using the diagram and the fact that $\Gamma_*(F_1 \otimes K_2)$ is
  free we get:
  \begin{equation}
    \label{greater2}
      0 \to F_1 \otimes K_2 \to \Sigma_2^X\Sigma_1^Y \to  \Sigma_1^X
    \Sigma_2^Y \to 0, \qquad     \Sigma_{i+1}^X\Sigma_1^Y \simeq
    \Sigma_i^X\Sigma_2^Y, \forall i \ge 2.
  \end{equation}

  Repeating once more this procedure and using the periodicity of \eqref{infiniteY} we get:
  \[
    0 \to F_2 \otimes \sI_{Y/X} \to \Sigma_1^X \Sigma_2^Y \to \Sigma_3^Y \to 0.
  \]
  Then, using \eqref{periodic} and lifting $F_1(-3) \to \sI_{C/Y}(-3) \simeq \Sigma_3^Y$ to $F_1(-3) \to \Sigma_1^X \Sigma_2^Y$, we have the exact sequence:
  \[
    0 \to F_2 \otimes K_2 \to \Sigma_2^X \Sigma_2^Y \to \Sigma_1^X
    \Sigma_3^Y \to 0.
  \]

  Summing up, \eqref{greater3} and \eqref{greater2} give $\Sigma_4^X \simeq
  \Sigma_3^X\Sigma_1^Y \simeq \Sigma_2^X\Sigma_2^Y$, so that the above
  sequence tensored with $\sO_X(5)$ becomes:
  \[
    0 \to 3\sO_X \to \sS \to \Sigma_1^X(\sI_{C/Y}(2)) \to 0,
  \]
  which is the sequence appearing in the statement. The fact that
  $\rh^0(X,\sS)=3$ is clear from the sequence.
  Since $X$ is smooth and $C
  \subset X$ is arithmetically Cohen-Macaulay of
  codimension $3$, the syzygy sheaf $\Sigma_4^X$ is ACM and hence locally free.
  Looking at the above resolution we compute the following invariants of
  $\sS$:
  \[
    \rank(\sS)=6, \qquad c_1(\sS) = 0, \qquad c_2(\sS)=H^2, \qquad
    \rp_\sS(t)=\frac 1 8 (t+1)^2(t+2)^2.
  \]

  It remains to prove $\rH^*(\sS(-1))=\rH^*(\sS(-2))=0$. By
  \eqref{hatSO}, it suffices to show $\rH^*(\sG(-1))=\rH^*(\sG(-2))=0$. By definition we have
   \begin{equation}
     \label{SI}
     0 \to \sG \to 3\sO_X \to \sI_{C/Y}(2) \to 0,
   \end{equation}
   and $\sI_{C/Y}(2)$ is Ulrich on $Y$ so
   $\rH^*(\sI_{C/Y}(1))=\rH^*(\sI_{C/Y})=0$. We conclude that
   $\rH^*(\sG(-1))=\rH^*(\sG(-2))=0$. 
{The Chern characters for $\sG_C$ and $\sS$ can be computed
  immediately from \eqref{SI} and \eqref{hatSO}, see also \cite[Section 2.2]{LLMS18}}. 
\end{proof}

Along the way we found the following minimal free resolution of
$\sO_C$ over $X$:
  \[
    \begin{array}{ccccccccc}
      &  3 \sO_X(-5) & & 9\sO_X(-4) & & \sO_X(-2) & & 2\sO_X(-1) &  \\
      \cdots \to &  \oplus & \stackrel{d_4} \to & \oplus &\stackrel{d_3} \to & \oplus & \stackrel{d_2} \to & \oplus & \stackrel{d_1} \to  \sO_X \to \sO_C \to 0.\\
      & 9\sO_X(-6) & & 3\sO_X(-5) & & 9 \sO_X(-3)& & 3\sO_X(-2) & 
    \end{array}
  \]
  This is an instance of Shamash's resolution. It becomes periodic after three steps.
  We record that $\sS$ fits into:
\[
\xymatrixrowsep{10pt}
\xymatrixcolsep{10pt}
\xymatrix{
\cdots \ar[r] & 9 \sO_X(-2) \oplus 3 \sO_X(-3) \ar^-{d_5}[rr] \ar[dr]  && 3 \sO_X \oplus 9 \sO_X(-1)
\ar[dr] \ar^-{d_4}[rr]&& 9 \sO_X(1) \oplus 3 \sO_X \ar[r] & \cdots \\
& & \Sigma_5^X(\sO_C(5)) \ar[ur] & & \sS \ar[ur]  & & &
}
\]
 Recall that the matrix $M$ of linear forms presents
  $\sI_{C/Y}$.
  We see it as a map of $\sO_Y$-modules with target in 
  $3\sO_Y$ and denote its image by  $\sR$:
  \begin{equation}
    \label{defR}
\sR = \Sigma_1^Y(\sI_{C/Y}(2)) \simeq \im(M), \qquad \mbox{with}
\qquad M : 3\sO_Y(-1) \to 3\sO_Y.    
  \end{equation}
{From  \eqref{infiniteY} we get the exact sequences:
    \begin{equation}
      \label{morexp}
      0 \to \sI_{C/Y} (-1) \to 3 \sO_Y (-1) \to \sR \to 0, \qquad 0 \to \sR \to
      3 \sO_Y \to \sI_{C/Y}(2) \to 0.
    \end{equation}}

The following lemma is essentially \cite[Proposition 2.5]{LLMS18}, we
reproduce it here for self-containedness. In fact, given a Cohen-Macaulay
twisted cubic $C \subset X$, the sheaf $\sG=\sG_C$
represents uniquely a point of the Lehn-Lehn-Sorger-van Straten
eightfold $\sZ$ associated with the cubic fourfold $X$.

\begin{lem} \label{hat S is stable}
  Assume that $Y$ is integral. Then the sheaf $\sG$ is stable with:
  \[
    \rp_{\sG}(t)=\ru(t-1)=\frac 18 (t+3)(t+2)(t+1)t, \qquad  \rH^*(X,\sG(-t))=0, \qquad \mbox{for $t=0,1,2$}.
  \]
  Finally, we have $\sext_X^i(\sG,\sO_X)=0$ except for $i=0,1$,
  in which case:
  \[\sG^\vee \simeq 3\sO_X,
    \qquad \sext_X^1(\sG,\sO_X) \simeq \shom_Y(\sI_{C/Y},\sO_Y) = \sO_Y(C).
    \]
\end{lem}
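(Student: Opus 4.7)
The plan is to deduce every claim from the defining sequence \eqref{SI}, exploiting that $\sI_{C/Y}(2)$ is an Ulrich line bundle on the integral cubic surface $Y$, which has codimension two in $X$ and satisfies $\omega_{Y/X}\simeq \sO_Y(2)$.

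First, for the Hilbert polynomial and the cohomology vanishings, I twist \eqref{SI} by $\sO_X(-t)$ and take the long exact cohomology sequence. For $t=1,2$ the middle term $3\sO_X(-t)$ is acyclic (Koszul on $\PP^5$) and so is $\sI_{C/Y}(2-t)$ on $Y$ by Ulrich vanishing, giving $\rH^*(X,\sG(-t))=0$. For $t=0$, the induced map $\rH^0(3\sO_X)\to \rH^0(Y,\sI_{C/Y}(2))$ is an isomorphism of three-dimensional spaces by construction, and all higher cohomologies vanish on both sides, so $\rH^*(X,\sG)=0$ as well. Taking Euler characteristics in \eqref{SI} one gets
\[
\chi(\sG(t)) = 3\chi(\sO_X(t))-\tfrac{3}{2}(t+1)(t+2) = \tfrac{3}{8}\,t(t+1)(t+2)(t+3),
\]
which, divided by $\rank(\sG)=3$, is exactly $\ru(t-1)$.

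For stability, observe that $\sG$ is torsion-free of rank three, and that $c_1(\sG)=0$ since the cokernel $i_*\sI_{C/Y}(2)$ has codimension-two support (so its Chern character begins in degree two). Suppose $\sG'\subsetneq\sG$ is a saturated destabilizing subsheaf of rank $r'\in\{1,2\}$ with $c_1(\sG')\ge 0$ in $\Pic(X)=\ZZ\cdot H$. Passing to the reflexive hull and using the embedding $\wedge^{r'}\sG'^{\vee\vee}\hookrightarrow\wedge^{r'}(3\sO_X)\simeq \sO_X^{\binom{3}{r'}}$, we get $\det\sG'^{\vee\vee}=\sO_X(k)$ with $k\le 0$, hence $k=0$; combined with the vanishing $\rH^0(\sG)=0$ just established, this rules out such a $\sG'$. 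This reproduces the argument of \cite[Prop.~2.5]{LLMS18}.

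Finally, for the Ext computation and the identification of $\sG^\vee$, I apply $R\shom_X(-,\sO_X)$ to \eqref{SI}. Grothendieck duality for the codimension-two embedding $i\colon Y\hookrightarrow X$ yields
\[
R\shom_X(i_*F,\sO_X)\simeq i_*\,R\shom_Y(F,\sO_Y(2))[-2]
\]
for any coherent $F$ on $Y$. For $F=\sI_{C/Y}(2)$, a rank-one Cohen-Macaulay sheaf on $Y$, one has $\shom_Y(\sI_{C/Y}(2),\sO_Y(2))\simeq \shom_Y(\sI_{C/Y},\sO_Y)=\sO_Y(C)$ and the higher $\sext^j_Y$ vanish. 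Chasing the long exact sequence of $\sext^i_X(-,\sO_X)$'s then gives $\sG^\vee\simeq 3\sO_X$, $\sext^1_X(\sG,\sO_X)\simeq i_*\sO_Y(C)$, and $\sext^i_X(\sG,\sO_X)=0$ for $i\ge 2$. The most delicate step is the stability in the case where $Y$ is integral but singular, where $\sI_{C/Y}$ may fail to be locally free; the reference to \cite{LLMS18} handles this uniformly via the moduli interpretation of $\sG$ as a point of the Lehn-Lehn-Sorger-van Straten eightfold.
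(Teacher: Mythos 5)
Your computations of the Hilbert polynomial, of the vanishing $\rH^*(\sG(-t))=0$ for $t=0,1,2$, and of $\sext^\bullet_X(\sG,\sO_X)$ via Grothendieck duality for the codimension-two inclusion $Y\hookrightarrow X$ (with $\omega_{Y/X}\simeq\sO_Y(2)$) are correct and essentially identical to the paper's. The gap is in the stability argument. The determinant trick $\wedge^{r'}\sG'^{\vee\vee}\hookrightarrow \wedge^{r'}(3\sO_X)$ only yields $c_1(\sG')=0$, i.e.\ it proves $\mu$-semistability; the lemma claims Gieseker stability, and the whole difficulty lies in the borderline subsheaves with $c_1=0$ whose reduced Hilbert polynomial still satisfies $\rp_{\sG'}\ge\rp_{\sG}$ because of lower-order terms. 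Concretely, a saturated rank-one subsheaf is $\sI_{Z/X}$ with $\codim Z\ge 2$, and $\rp_{\sI_{Z/X}}=\rp_{\sG}$ exactly when $Z$ is a plane contained in $X$; such a sheaf has $\rH^0=0$ and trivial determinant, so neither of your two observations excludes it. The same applies in rank two, e.g.\ to a subsheaf whose torsion-free quotient is $\sI_{Z/X}$ for a plane $Z$. Saying that "$\rH^0(\sG)=0$ rules out such a $\sG'$" is therefore not a proof.

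This is precisely where the paper's proof does its real work: it first shows $\sG$ is simple (via $\End_X(\sG)\simeq\Ext^1_X(\sI_{C/Y}(2),\sG)\simeq\End_X(\sI_{C/Y})\simeq\kk$), then treats the ranks separately. In rank one it uses integrality of $Y$ to show that a nonzero map $\sO_Z\to\sI_{C/Y}(2)$ forces $Y\subseteq Z$, hence $\deg Z\ge 3$ and $\rp_{\sI_{Z/X}}<\rp_{\sG}$; in rank two it uses the filtration $0\to 3\sI_{Y/X}\to\sG\to\sR\to 0$ and polystability of $3\sI_{Y/X}$ to force $Z=Y$, whence $\sG$ would split, contradicting simplicity. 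None of this is reproduced or replaced in your argument, and deferring to \cite{LLMS18} does not close the gap here: the paper states that it reproduces that proof precisely to be self-contained, and in any case the citation does not repair the specific (insufficient) reduction you wrote down. You should either carry out the case analysis above or quote the external result outright instead of presenting a partial argument as if it were complete.
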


\begin{proof}
  The Hilbert polynomial of $\sG$ is computed directly from the
  previous proposition. Next, we use the sheaf $\sR$ which satisfies
  $\sR \simeq \Sigma_2^Y(\sO_{C}(2))$.
  Recall from the proof of the previous proposition the sequence
  \eqref{hatS} that we rewrite as:
  \begin{equation}
    \label{IR}
     0 \to 3 \sI_{Y/X} \to \sG \to \sR \to 0.    
  \end{equation}
    
  By definition of $\sG
   = \Sigma_1^X(\sI_{C/Y}(2))$, the map $3\sO_X \to \sI_{C/Y}(2)$ in \eqref{SI} 
   induces an isomorphism on global sections, hence  
   $\rH^*(X,\sG)=0$. The vanishing $\rH^*(X,\sG(-1))=\rH^*(X,\sG(-2))=0$ was proved in the previous proposition.

   Next, we show first that $\sG$ is simple. 
   Applying $\Hom_X(-,\sG)$ to \eqref{SI}, we get:
   \[
     \End_X(\sG) \simeq \Ext_X^1(\sI_{C/Y}(2),\sG).
   \]
   We note that $\sI_{C/Y}$ is simple,
   $\Hom_X(\sI_{C/Y}(2),\sO_X)=0$ as $\sI_{C/Y}$ is torsion
   and:
   \[
     \Ext_X^1(\sI_{C/Y}(2),\sO_X) \simeq \rH^3(\sI_{C/Y}(-1))^\vee=0
   \]
   since $\dim(Y)=2$. Hence applying $\Hom_X(\sI_{C/Y}(2),-)$ to 
   \eqref{SI}, we observe that $\sG$ is simple:
   \[
     \End_X(\sG) \simeq  \Ext_X^1(\sI_{C/Y}(2),\sG) \simeq \End_X(\sI_{C/Y}) \simeq \kk.
   \]

   Suppose that $\sG$ is not stable. Consider a saturated destabilizing subsheaf $\sK$ of $\sG$ so $\rank(\sK)
   \in \{1,2\}$ and $\rp_\sK \ge \rp_{\sG}$ so that $\sQ = \sG / \sK$ is torsion-free with $\rank(\sQ)=3-\rank(\sK)$. Since
   $\sK \subset \sG \subset 3\sO_X$, we have $\mu(\sK) \le
   0$. From $\rp_\sK \ge \rp_{\sG}$ we deduce that
   $c_1(\sK)=c_1(\sQ)=0$.

   We look at the two possibilities for $\rank(\sK)$.
   If $\rank(\sK)=1$, then $\sK$ is torsion-free with $c_1(\sK)=0$
   so there is a closed subscheme $Z \subset X$  of codimension at
   least $2$ such that $\sK \simeq
   \sI_{Z/X}$. If $Z=\emptyset$ then $\sK \simeq \sO_X$, which is
   impossible as $\rH^0(X,\sG)=0$. Now, for $Z \ne \emptyset$,
   consider the inclusion $\sI_{Z/X} \subset \sG
   \subset 3\sO_X$. Taking reflexive hulls, we see that this factors
   through a single copy of $\sO_X$ in $3\sO_X$. Looking at
   \eqref{SI}, we get that the quotient
   $\sO_Z=\sO_X/\sI_{Z/X}$ inherits a non-zero map to
   $\sI_{C/Y}(2)$. The image of this map is $\sO_Y$ itself
   because $\sI_{C/Y}(2)$ is torsion-free of rank $1$ over $Y$ as $Y$ is integral.

   Note that $\rp_{\sI_{Z/X}} = \rp_{\sG}$ precisely when 
   $Z$ is a linear subspace $\PP^2$ contained in $X$, and that
   $\rp_{\sI_{Z/X}} < \rp_{\sG}$ if $\deg(Z) \ge 2$ and
   $\dim(Z)=2$.
   Hence, the image of $\sO_Z \to \sI_{C/Y}(2)$ cannot be the whole $\sO_Y$ as
   then $Y \subseteq Z$, so we have $\dim(Z) = 2$ and $\deg(Z) \ge 3$, while we
   are assuming $\rp_{\sI_{Z/X}} \ge \rp_{\sG}$.  
   Therefore, the possibility $\rank(\sK)=1$ is ruled out.
   \smallskip
   
   Now we may assume $\rank(\sK)=2$. Arguing as in the previous case,
   we deduce that there is a closed
   subscheme $Z \subset X$ of codimension at least $2$ such that $\sQ \simeq \sI_{Z/X}$.
   Using \eqref{IR} and noting that $3\sI_{Y/X}$ cannot be contained in $\sK$
   for $\rank(\sK)=2$, we get a non-zero map $3\sI_{Y/X} \to
   \sI_{Z/X}$ by composing $3 \sI_{Y/X} \mono \sG$ with $\sG
   \epi \sI_{Z/X}$. The image of this map is of the form $\sI_{Z'/X}
   \subset \sI_{Z/X}$ for some closed subscheme $Z' \supseteq Z$ of $X$. Since $3\sI_{Y/X}$ is polystable and
   $3\sI_{Y/X} \epi \sI_{Z'/X}$, we have $\sI_{Z'/X} \simeq \sI_{Y/X}$
   so $Z'=Y$. In particular, we have $Z \subseteq Y$.

   Again, we use that $\rp_{\sI_{Z/X}} > \rp_{\sG}$ as soon as
   $\dim(Z) \le 1$, so the
   assumption that $\sK$ destabilizes $\sG$ forces 
   $\dim(Z) \ge 2$. Hence, $Z$ is a surface contained in $Y$ so that $Z=Y$
   since $Y$ is integral. Then $\sI_{Y/X}$ is a direct summand of $\sG$ which therefore splits as $\sG = \sK \oplus \sI_{Y/X}$. But
   this contradicts the fact that $\sG$ is simple.
   We conclude that $\sG$ must be stable.

   Finally, we apply $\shom_X(-,\omega_X)$ to \eqref{SI} and use
   Grothendieck duality to compute $\sext^i_X(\sG,\sO_X)$ using
   that $\sI_{C/Y}$ is reflexive on $Y$ to get:
   \[\sext^1_X(\sG,\omega_X) \simeq \sext^2_X(\sI_{C/Y} (2),\omega_X) \simeq
     \shom_Y(\sI_{C/Y} (2),\omega_Y).
   \]
   Since $\omega_X \simeq \sO_X(-3)$ and $\omega_Y \simeq \sO_Y(-1)$,
   the conclusion follows.   
 \end{proof}

 The next lemma analyzes the restriction of $\sS$ onto $Y$.

 \begin{lem} \label{SonY}
   There is a surjection $\xi : \sS|_Y \to \sR$ whose kernel fits into:
   \begin{equation}
     \label{kerxi}
     0 \to \sR(1) \to \ker(\xi) \to 2 \sI_{C/Y}(1) \to 0.     
   \end{equation}
 \end{lem}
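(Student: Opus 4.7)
My plan is to build $\xi$ as the composition of two natural surjections, then compute $\ker(\xi)$ by restricting the short exact sequences \eqref{hatSO} and \eqref{SI} from $X$ to $Y$ via the Koszul resolution of $\sO_Y$, and finally identify the resulting Tor boundary map using the matrix factorization $(M,M')$ of $Y$ in $\spanY$.

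First, I compose the surjection $\sS \epi \sG$ from \eqref{hatSO} with $\sG \epi \sR$ from \eqref{IR}. Since $\sR$ is $\sO_Y$-supported, the composite factors through $\sS|_Y$, yielding a surjection $\xi: \sS|_Y \epi \sR$. To compute its kernel, I exploit that $Y \subset X$ is cut out by the two linear forms defining $\spanY \subset \PP^5$; the Koszul complex $0 \to \sO_X(-2) \to 2\sO_X(-1) \to \sO_X \to \sO_Y \to 0$ tensored with any $\sO_Y$-module $\sF$ has vanishing differentials, whence $\tor_i^X(\sF, \sO_Y) = \binom{2}{i} \sF(-i)$ for $i \in \{0,1,2\}$. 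Applied to $\sI_{C/Y}(2)$ this gives $\tor_1 = 2\sI_{C/Y}(1)$ and $\tor_2 = \sI_{C/Y}$. Restricting \eqref{SI} to $Y$ produces
\[
0 \to 2\sI_{C/Y}(1) \to \sG|_Y \to \sR \to 0,
\]
and restricting \eqref{hatSO}, using that $\sS$ is locally free and $\tor_1^X(\sG,\sO_Y) \simeq \tor_2^X(\sI_{C/Y}(2),\sO_Y) = \sI_{C/Y}$, gives
\[
0 \to \sI_{C/Y} \xr{\delta} 3\sO_Y \to \sS|_Y \to \sG|_Y \to 0.
\]

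The crux is identifying $\coker(\delta) \simeq \sR(1)$. The $2$-periodic resolution of $\sI_{C/Y}(3)$ on $Y$,
\[
\cdots \to 3\sO_Y(-2) \xr{M'} 3\sO_Y \xr{M} 3\sO_Y(1) \to \sI_{C/Y}(3) \to 0,
\]
shows $\sR(1) = \coker(M')$; moreover $\ker(M') = \sR(-2)$ by the matrix factorization identity, and the twist by $-2$ of $0 \to \sR \to 3\sO_Y \to \sI_{C/Y}(2) \to 0$ yields a canonical isomorphism $\im(M') \simeq \sI_{C/Y}$. Tracing the construction of \eqref{hatSO} in Proposition~\ref{syzygy of twisted}---built from iterated syzygy diagrams using precisely this matrix factorization and the Koszul resolution of $Y \subset X$---identifies $\delta$ with the inclusion $\sI_{C/Y} \simeq \im(M') \hookrightarrow 3\sO_Y$, so $\coker(\delta) \simeq \coker(M') = \sR(1)$. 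Finally, $\ker(\xi)$ is the preimage in $\sS|_Y$ of $2\sI_{C/Y}(1) \subset \sG|_Y$ along the surjection $\sS|_Y \epi \sG|_Y$ whose kernel is $\sR(1)$; this pullback gives
\[
0 \to \sR(1) \to \ker(\xi) \to 2\sI_{C/Y}(1) \to 0.
\]

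The main obstacle is the identification of the Tor boundary $\delta$ with the inclusion coming from $M'$: all the other steps amount to direct Koszul--Tor computations. This identification requires carefully tracking the lift-and-diagram procedure in the proof of Proposition~\ref{syzygy of twisted} and observing that the cocycle representing the extension class of \eqref{hatSO} restricts on $Y$ to the map determined by the adjugate $M'$ of the matrix factorization.
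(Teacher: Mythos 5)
Your proposal is correct and follows essentially the same route as the paper: restrict \eqref{SI} and \eqref{hatSO} to $Y$ via the Koszul resolution \eqref{koszulY}, compute the two Tor terms $2\sI_{C/Y}(1)$ and $\sI_{C/Y}$, identify the image of $3\sO_Y \to \sS|_Y$ with $\sR(1)$ using the $2$-periodic resolution \eqref{infiniteY}/\eqref{periodic}, and obtain $\ker(\xi)$ as the preimage of $2\sI_{C/Y}(1)$ under $\sS|_Y \to \sG|_Y$. Your treatment of the one delicate point—identifying the Tor boundary $\delta$ with the map induced by the adjugate $M'$—is a more explicit version of what the paper dispatches with "looking at \eqref{periodic}," and is consistent with it.
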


 \begin{proof}
   First of all, restricting the Koszul resolution \eqref{koszulY} to
   $Y$ we find:
   \[
     \stor_1^X(\sI_{C/Y},\sO_Y) \simeq 2 \sI_{C/Y}(-1), \qquad
     \stor_2^X(\sI_{C/Y},\sO_Y) \simeq \sI_{C/Y}(-2).
   \]
   Therefore, restricting \eqref{SI} to $Y$ we get:
   \begin{equation}
     \label{4t}
     0 \to 2 \sI_{C/Y}(1) \to \sG|_Y \to 3\sO_Y \to \sI_{C/Y}(2)
     \to 0,
   \end{equation}
   and hence:
   \begin{equation}
     \label{hatS|Y}
     0 \to 2 \sI_{C/Y}(1) \to \sG|_Y \to \sR \to 0.     
   \end{equation}
   We also get:
   \[
     \stor_1^X(\sG,\sO_Y) \simeq \stor_2^X(\sI_{C/Y}(2),\sO_Y) \simeq \sI_{C/Y}.
   \]
   {Therefore, by the previous display, restricting \eqref{hatSO} to
     $Y$ we obtain the exact sequence:
   \[
     0 \to \sI_{C/Y} \to 3\sO_Y \to \sS|_Y \to \sG|_Y \to 0.
   \]
   Then, comparing \eqref{morexp} and the leftmost part of the above
   sequence, we see that the image of the middle
   map is $\sR(1)$.
 Hence we extract from \eqref{4t} the exact sequence:
   \begin{equation}
     \label{sS|Y}
     0 \to \sR(1) \to \sS|_Y \to \sG|_Y \to 0.     
   \end{equation}}
   Composing $\sS|_Y \to \sG|_Y$ with the surjection appearing in
   \eqref{hatS|Y} we get the surjection $\xi$. Using \eqref{hatS|Y}
   and \eqref{sS|Y} we get the desired filtration for $\ker(\xi)$.
 \end{proof}

\subsection{Elementary modification along a cubic surface}
\label{second conic}

In \S \ref{Subsection:construction of S} we constructed an ACM bundle
$\sS$ of rank $6$. Recall that $\rh^0(X,\sS)=3$, and these three global
sections of $\sS$ make it unstable. Hence, it is natural to consider
an elementary modification of $\sS$ by a sheaf $\sA$ such that  
$H^0(\sS) \stackrel{\sim} \to H^0(\sA)$. Moreover, Proposition
\ref{Proposition:KS20} suggests a good candidate for $\sA$ to get
closer to an Ulrich bundle on $X$. Indeed, we should have:
\[
  \chi_\sA(t)=6\rp_\sS(t)-6\ru(t-1)= \frac 3 2(t+2)(t+1).
\]
A natural choice for $\sA$ would thus be an Ulrich line bundle on $Y$.
In terms of Chern classes (as a coherent sheaf on $X$), we should have:
\[
c_1(\sA)=0, \qquad c_2(\sA)=-H_X^2.
\]

Since an Ulrich line bundle on a cubic surface comes from a twisted cubic, 
we need to choose another twisted cubic $D$ in $Y$, 
construct a surjection $\sS \to \sO_Y(D)$ so that the induced map on $H^0$ is an isomorphism, and take the kernel to perform an elementary modification. 
To do this, from now on in this section, we assume
 that $Y$ is the blow-up of $\PP^2$ at the six points $p_1,\ldots,p_6$
 in general position and that the blow-down map $\pi : Y \to \PP^2$ is
 associated with the linear system $|\sO_Y(C)|$. Write $L$ for the
 class of a line
 in $\PP^2$ and denote by $E_1,\ldots,E_6$ the exceptional
 divisors of $\pi$, so that $C=\pi^*L$ and $H_Y=3C-E_1-\cdots - E_6$.

{ We will relate the pulled-back cotangent bundle of
  $\PP^2$ to the sheaf $\sR$ defined in \eqref{defR},
  by proving that $\sR(1) \simeq \pi^*(\Omega_{\PP^2}(2))$, see
 Lemma \ref{iso}}.

 \begin{lem}
   Let $Z = \{p_1,p_2,p_3\}$. Then we have:
   \[
     0 \to \sO_{\PP^2}(-2) \to \Omega_{\PP^2}(1) \to \sI_{Z/\PP^2}(1) \to 0.
   \]
 \end{lem}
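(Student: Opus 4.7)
The plan is to realize the desired sequence as (a twist of) the Koszul complex of a regular section of $T_{\PP^2}$. First I would recall the isomorphism $T_{\PP^2} \simeq \Omega_{\PP^2}(3)$, which follows from the identity $\sE^\vee \simeq \sE \otimes (\det \sE)^{-1}$ valid for any rank-$2$ bundle $\sE$, together with $\det \Omega_{\PP^2} = \sO_{\PP^2}(-3)$. A Chern class calculation (via the Euler sequence, or by twisting) gives $c_2(T_{\PP^2}) = c_2(\Omega_{\PP^2}(3)) = 3$, so that any regular global section of $T_{\PP^2}$ vanishes on a zero-dimensional subscheme of length $3$.

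Next I would exhibit an explicit section $s \in \rH^0(\PP^2, T_{\PP^2})$ whose zero scheme is precisely $Z$. Because the six points $p_1,\dots,p_6$ are in general position, one may choose projective coordinates so that $p_1 = [1:0:0]$, $p_2 = [0:1:0]$, $p_3 = [0:0:1]$. The vector field
\[
s = \lambda_0 x_0 \frac{\partial}{\partial x_0} + \lambda_1 x_1 \frac{\partial}{\partial x_1} + \lambda_2 x_2 \frac{\partial}{\partial x_2},
\]
for any choice of pairwise distinct scalars $\lambda_0,\lambda_1,\lambda_2$, descends via the Euler sequence to a well-defined section of $T_{\PP^2}$, and the non-vanishing of the differential at each $p_i$ shows that the three zeros are transverse. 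Consequently $Z(s)$ is reduced and equals $Z$.

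Finally I would apply the Koszul complex of this regular section. For a rank-$2$ bundle $\sE$ on $\PP^2$ with regular section $s$ vanishing on $Z$, one has
\[
0 \to (\det \sE)^{-1} \to \sE^\vee \xrightarrow{s^\vee} \sI_{Z/\PP^2} \to 0.
\]
Taking $\sE = T_{\PP^2}$, so that $\det\sE = \sO_{\PP^2}(3)$ and $\sE^\vee = \Omega_{\PP^2}$, this becomes
\[
0 \to \sO_{\PP^2}(-3) \to \Omega_{\PP^2} \to \sI_{Z/\PP^2} \to 0,
\]
and twisting by $\sO_{\PP^2}(1)$ yields the sequence in the statement.

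The only step in this outline that is more than a formal manipulation is the production of a section $s$ with $Z(s)$ exactly equal to $Z$; once the diagonal vector field above is identified this obstacle dissolves, so I expect no real difficulty in carrying the proof through.
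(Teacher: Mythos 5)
Your proof is correct, but it follows a genuinely different route from the paper's. You build the sequence ``from the left'': normalize $p_1,p_2,p_3$ to the coordinate points (legitimate, since general position makes them distinct and non-collinear), exhibit the diagonal vector field as a section of $T_{\PP^2}\simeq\Omega_{\PP^2}(3)$ whose zero scheme is exactly $Z$, and read off the twisted Koszul complex. The paper instead builds it ``from the right'': the Serre/Cayley-Bacharach construction produces \emph{some} rank-$2$ bundle $\sF$ as an extension of $\sI_{Z/\PP^2}(1)$ by $\sO_{\PP^2}(-2)$, then the hypothesis that $Z$ lies on no line gives $\rH^0(\sF)=0$, hence stability, and the classification of stable bundles on $\PP^2$ with $c_1=-L$, $c_2=L^2$ identifies $\sF$ with $\Omega_{\PP^2}(1)$. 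Your argument is more elementary and explicit — it needs neither the Cayley-Bacharach existence theorem nor the uniqueness of the stable bundle with minimal discriminant — at the cost of a coordinate choice and of using that $Z$ is reduced (which holds here). One small polish: rather than checking transversality of the zeros by hand, you can simply observe that the zero scheme of your section has total length $c_2(T_{\PP^2})=3$ and is supported at three distinct points, hence is reduced and equal to $Z$. The paper's route is coordinate-free and would apply verbatim to an arbitrary length-$3$ subscheme not contained in a line, but for the lemma as stated both proofs are complete.
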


 \begin{proof}
   By assumption $Z$ is contained in no line, hence
   by the Cayley-Bacharach property (see for instance \cite[Theorem
   5.1.1]{HL10}) there is a vector bundle $\sF$ of rank $2$
   fitting into:
   \[
     0 \to \sO_{\PP^2}(-2) \to \sF \to \sI_{Z/\PP^2}(1)
     \to 0.
   \]

   Note that $c_1(\sF)=-L$ and $c_2(\sF)=L^2$. By the above sequence
   $\rH^0(\sF)=0$ so $\sF$ is stable. But the only stable bundle on
   $\PP^2$ with $c_1(\sF)=-L$ and $c_2(\sF)=L^2$ is $\Omega_{\PP^2}(1)$.
 \end{proof}

Set $D=2 C-E_1-E_2-E_3$. This is a class of a twisted
cubic in $Y$ with:
\[
  D \cdot C=2.
\]

 \begin{lem} \label{iso}
   There is a surjection $\eta : \sR(1) \to \sO_Y(D)$ such that the
   induced map on global sections
   $\rH^0(\sR(1)) \to \rH^0(\sO_Y(D))$ is an isomorphism.
 \end{lem}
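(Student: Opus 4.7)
The plan is to build $\eta$ by adjunction and then verify both conditions by descending to $\PP^2$. First, I would twist the sequence from the preceding lemma by $\sO_{\PP^2}(1)$ to obtain
\[
0 \to \sO_{\PP^2}(-1) \to \Omega_{\PP^2}(2) \to \sI_{Z/\PP^2}(2) \to 0. \quad (\star)
\]
Using $\sR(1) \simeq \pi^*\Omega_{\PP^2}(2)$ and the projection formula
\[
\pi_*\sO_Y(D) \simeq \sO_{\PP^2}(2) \otimes \pi_*\sO_Y(-E_1-E_2-E_3) \simeq \sI_{Z/\PP^2}(2),
\]
the $\pi^* \dashv \pi_*$ adjunction identifies $\Hom_Y(\sR(1), \sO_Y(D))$ with $\Hom_{\PP^2}(\Omega_{\PP^2}(2), \sI_{Z/\PP^2}(2))$, and I would define $\eta$ as the morphism adjoint to the canonical surjection in $(\star)$.

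Next, I would verify surjectivity of $\eta$ by factoring it as a composition $\pi^*\Omega_{\PP^2}(2) \xrightarrow{\alpha} \pi^*\sI_{Z/\PP^2}(2) \xrightarrow{\beta} \sO_Y(D)$, where $\alpha$ is the pull-back of the right arrow in $(\star)$, hence surjective by right-exactness of $\pi^*$; and $\beta$ is the counit of adjunction, surjective since $\pi^{-1}\sI_{Z/\PP^2}\cdot\sO_Y = \sO_Y(-E_1-E_2-E_3)$, so that its kernel is a torsion sheaf supported on $E_1\cup E_2\cup E_3$.

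Finally, for the statement on $\rH^0$, the identity $\rH^0(Y, \sF) = \rH^0(\PP^2, \pi_*\sF)$ together with the projection formula gives $\rh^0(\sR(1)) = \rh^0(\Omega_{\PP^2}(2))$ and $\rh^0(\sO_Y(D)) = \rh^0(\sI_{Z/\PP^2}(2))$, with $\rH^0(\eta)$ identified with the global-section map induced by the surjection in $(\star)$. The Euler sequence yields $\rh^0(\Omega_{\PP^2}(2)) = 3$; since three non-collinear points impose independent conditions on conics, $\rh^0(\sI_{Z/\PP^2}(2)) = 3$; and the cohomology long exact sequence of $(\star)$, combined with $\rh^0(\sO_{\PP^2}(-1)) = 0$, gives injectivity. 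Hence the induced map between two $3$-dimensional spaces is an isomorphism. The main point requiring care is the compatibility between the adjoint construction of $\eta$ and the factorization $\eta = \beta\circ\alpha$; this reduces to the standard fact that $\pi^*\sI_{Z/\PP^2}$ has torsion supported on the exceptional divisors, which contributes nothing to global sections and is killed on passing to $\sO_Y(D)$.
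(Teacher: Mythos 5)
Your proposal is correct and follows essentially the same route as the paper: the map you construct by adjunction is exactly the paper's composition $\sR(1)\simeq\pi^*\Omega_{\PP^2}(2)\to\pi^*(\sI_{Z/\PP^2}(2))\to\sO_Y(D)$, with your counit-surjectivity argument matching the paper's exact sequence whose kernel is $\bigoplus_i\sO_{E_i}(-1)$, and the $\rH^0$ statement reduced in both cases to the vanishing of $\rH^*(\sO_{\PP^2}(-1))$ in the sequence $0\to\sO_{\PP^2}(-1)\to\Omega_{\PP^2}(2)\to\sI_{Z/\PP^2}(2)\to 0$.
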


 \begin{proof}
   {From \eqref{morexp}, we recall the exact sequence:
   \[
     0 \to \sO_Y(-C) \to 3\sO_Y \to \sR(1) \to 0.
   \]
   Comparing this with the Euler sequence, since $C=\pi^*L$, we get
   $\sR(1) \simeq \pi^*(\Omega_{\PP^2}(2))$. {It follows from the
   projection formula that:
   \begin{equation}
     \label{H0R}
     \rH^0(\sR)=0.
   \end{equation}
   }

 {Next, we relate the pull-back to $Y$ of the ideal sheaf of $Z$ to $\sO_Y(D)$. 
To do this we note that, given a smooth point $p$ of a surface $T$, the blow-up
 $\mu : \tilde  T \to T$ at $p$ gives an exceptional divisor $E$ and an exact sequence:
 \begin{equation}
   \label{blowupT}
   0 \to \sO_{E}(-1) \to \mu^*(\sI_{p/T}) \to
   \sO_{\tilde T}(-E) \to 0.
 \end{equation}
 Indeed, after localizing at $p$, the Kozsul resolution of $p$ in
 $T$ is given by two linear equations $f_1$ and $f_2$ vanishing at $p$
 and reads:
 \[
   0 \to \sO_T \to 2\sO_T \to \sI_{p/T} \to 0.
 \]
Pulling back via $\mu$ we get:
 \[
   0 \to \sO_{\tilde T} \to 2\sO_{\tilde T} \to \mu^*(\sI_{p/T}) \to 0.
 \]
The pull-back of $f_1$ and $f_2$ vanish at $E$, so  $\sO_{\tilde T} \to 2\sO_{\tilde T}$ factors through
 $\sO_{\tilde T} \to \sO_{\tilde T}(E) \to 2\sO_{\tilde T}$, hence we get a
 commutative diagram:
 \[
   \xymatrix{
     0 \ar[r] & \sO_{\tilde T} \ar[d] \ar[r] & 2\sO_{\tilde T} \ar@{=}[d] \ar[r] &
     \mu^*(\sI_{p/T} \ar[d] ) \ar[r] & 0\\
     0 \ar[r] & \sO_{\tilde T}(E) \ar[r] & 2\sO_{\tilde T} \ar[r] &
     \sO_{\tilde T}(-E) \ar[r] & 0}
 \]
 By the snake lemma we get thus \eqref{blowupT}.
 Applying this subsequently to the blow-up of $\PP^2$ at $p_1$,
 $p_2$, $p_3$, we get the exact sequence:
 \begin{equation}
   \label{pi*IZ}
   0 \to \bigoplus_{i=1}^3\sO_{E_i}(-1) \to \pi^* (\sI_{Z/\PP^2}(2))
   \to \sO_Y(D) \to 0.
 \end{equation}}
} 
      
   By the previous lemma, we have
   \begin{equation}
     \label{Omega2}
     0 \to \sO_{\PP^2}(-1) \to \Omega_{\PP^2}(2) \to \sI_{Z/\PP^2}(2)
     \to 0
   \end{equation}
   and thus via $\pi^*$ an exact sequence:
   \[
     0 \to \sO_Y(-C) \to \sR(1) \to \pi^*(\sI_{Z/\PP^2}(2)) \to 0.
   \]
   Composing $\sR(1) \to \pi^*(\sI_{Z/\PP^2}(2))$ with the surjection
   appearing in \eqref{pi*IZ}, we get the following:
   \[
     0 \to \sO_Y(-C+E_1+E_2+E_3) \to \sR(1) \to \sO_Y(D) \to 0.
   \]
   The map on global sections  $\rH^0(\sR(1)) \to \rH^0(\sO_Y(D))$ is
   induced by the map $\rH^0(\Omega_{\PP^2}(2)) \to
   \rH^0(\sI_{Z/\PP^2}(2))$ arising from \eqref{Omega2} and as such it
   is an isomorphism since $\rH^*(\sO_{\PP^2}(-1))=0$.
 \end{proof}

 Given the class of a twisted cubic $C$ in $Y$, we observe that
 $C^\rt = 2H_Y-C$ is also the class of a twisted cubic. We denote:
 \[
   C^\rt = 2H_Y-C.
 \]

 This notation is justified by the fact that $\sI_{C^\rt/Y}$ is
 presented by the transpose matrix $M^\rt$ of $M$. We have:
 \[
   C^\rt \cdot D =    C \cdot D^\rt = 4.
 \]
 
 \begin{lem}
   There is a surjection $\zeta : \sS \to \sO_Y(D)$ inducing an
   isomorphism:
   \[
     \rH^0(X,\sS) \to \rH^0(\sO_Y(D)).
   \]
 \end{lem}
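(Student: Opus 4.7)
The plan is to construct $\zeta$ by first passing to $Y$. Since $\sS$ is locally free, adjunction gives $\Hom_X(\sS, \sO_Y(D)) \simeq \Hom_Y(\sS|_Y, \sO_Y(D))$, so I would look for a map $\tilde\eta : \sS|_Y \to \sO_Y(D)$ and take $\zeta$ to be its composition with the restriction $\sS \epi \sS|_Y$. By Lemma~\ref{SonY} the sheaf $\sR(1)$ sits as a subsheaf of $\sS|_Y$ via the sequence~\eqref{sS|Y}, and the strategy is to lift $\eta : \sR(1) \to \sO_Y(D)$ from the previous lemma to such a $\tilde\eta$; the obstruction lies in $\Ext^1_Y(\sG|_Y, \sO_Y(D))$. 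Once $\tilde\eta$ is obtained, it is automatically surjective, because its restriction to $\sR(1)$ already surjects onto $\sO_Y(D)$.

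The main work is to prove $\Ext^1_Y(\sG|_Y, \sO_Y(D)) = 0$. I would perform d\'evissage along~\eqref{hatS|Y}, namely $0 \to 2\sI_{C/Y}(1) \to \sG|_Y \to \sR \to 0$, together with the defining sequence $0 \to \sR \to 3\sO_Y \to \sI_{C/Y}(2) \to 0$, reducing the question to vanishings of $\Ext^i_Y(\sI_{C/Y}(j), \sO_Y(D))$ for $j \in \{1,2\}$ and $i \le 2$. Applying Serre duality on $Y$ (with $\omega_Y \simeq \sO_Y(-H_Y)$), each such group is identified with an $H^0$ or $H^1$ of an appropriate twist of $\sI_{C/Y}$, and I would compute these using the long exact sequence coming from $0 \to \sI_{C/Y}(-D) \to \sO_Y(-D) \to \sO_C(-D) \to 0$. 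The numerical inputs reduce to the non-effectiveness on the general cubic surface of classes such as $H_Y - D = C - E_4 - E_5 - E_6$ and $D - H_Y = -C + E_4 + E_5 + E_6$, which follows from the general-position assumption on $p_1, \ldots, p_6$ (no three collinear, no six on a conic). This is the main technical obstacle, though routine once set up.

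Finally, I would check that $\zeta$ induces an isomorphism on $H^0$ by chaining three isomorphisms of $3$-dimensional vector spaces. First, the ACM vanishings $\rH^0(\sS(-1)) = \rH^1(\sS(-1)) = 0$ from Proposition~\ref{syzygy of twisted} give $\rH^0(X, \sS) \simeq \rH^0(Y, \sS|_Y)$. Second, the vanishing $\rH^0(\sG|_Y) = 0$, deduced from \eqref{hatS|Y} together with $\rH^0(\sR) = 0$ (because $3\sO_Y \epi \sI_{C/Y}(2)$ is an isomorphism on $H^0$, by the Ulrich property of $\sI_{C/Y}(2)$) and $\rH^0(\sI_{C/Y}(1)) = 0$ (projective normality of the twisted cubic $C$), identifies $\rH^0(Y, \sS|_Y)$ with $\rH^0(\sR(1))$ via \eqref{sS|Y}. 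Third, $\eta_*$ is an isomorphism $\rH^0(\sR(1)) \simeq \rH^0(\sO_Y(D))$ by the previous lemma. Since $\tilde\eta$ extends $\eta$ by construction, $\zeta_*$ realises precisely this chain and is therefore an isomorphism, as required.
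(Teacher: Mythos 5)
Your proposal is correct and essentially reproduces the paper's argument: both lift $\eta:\sR(1)\to\sO_Y(D)$ through the filtration of $\sS|_Y$ from Lemma~\ref{SonY} (the paper in two steps via $\ker(\xi)$, you in one step with obstruction in $\Ext^1_Y(\sG|_Y,\sO_Y(D))$, but the d\'evissage reduces to the identical vanishings $\Ext^1_Y(\sI_{C/Y}(1),\sO_Y(D))=\Ext^1_Y(\sR,\sO_Y(D))=0$, computed from the same divisor classes $C+D-H_Y=E_4+E_5+E_6$ and $H_Y-C-D$), and both then compose with $\sS\to\sS|_Y$ and track $\rH^0$ the same way. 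One small point: the isomorphism $\rH^0(X,\sS)\simeq\rH^0(Y,\sS|_Y)$ needs the full vanishing $\rH^*(\sS(-1))=\rH^*(\sS(-2))=0$ from Proposition~\ref{syzygy of twisted} (the Koszul resolution of $\sO_Y$ has length two), not just the $\sS(-1)$ cohomology you cite.
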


 \begin{proof}
 According to the previous lemma, we have $\eta : \sR(1) \to \sO_Y(D)$
 inducing an isomorphism on global sections.
 We would like
 to use Lemma \ref{SonY} to lift $\eta$ to a surjection $\sS|_Y \to \sO_Y(D)$ and compose this
 lift with the restriction $\sS \to
 \sS|_Y$ preserving the isomorphism on global sections.

 So in the notation of Lemma \ref{SonY} we first lift $\eta$ to
 $\ker(\xi)$. To do this, we apply $\Hom_Y(-,\sO_Y(D))$ to
 \eqref{kerxi} and get:
  \[
   \cdots \to \Hom_Y(\ker(\xi),\sO_Y(D)) \to \Hom_Y(\sR(1),\sO_Y(D)) \to
   2\rH^1(\sO_Y(C+D-H_Y)) \to \cdots
 \]
 
 Now, $C+D-H_Y=E_4+E_5+E_6$ so $\rH^1(\sO_Y(C+D-H_Y))=0$. Therefore $\eta$
 lifts to $\hat \eta : \ker(\xi) \to \sO_Y(D)$. Note that by
 \eqref{kerxi} the map $\sR(1) \to \ker(\xi)$ induces an isomorphism on
 global sections, so $\hat \eta$ gives an isomorphism
 $\rH^0(\ker(\xi)) \simeq \rH^0(\sO_Y(D))$. 

 Next, write:
 \[
   0 \to \ker(\xi) \to \sS|_Y \to \sR \to 0,
 \]
 and apply $\Hom_Y(-,\sO_Y(D))$. We get an exact sequence:
 \[
   \cdots \to \Hom_Y(\sS|_Y,\sO_Y(D)) \to \Hom_Y(\ker(\xi),\sO_Y(D))
   \to \Ext^1_Y(\sR,\sO_Y(D)) \to \cdots 
 \]
 So $\hat \eta$ lifts to $\sS|_Y \to \sO_Y(D)$ if we prove
 $\Ext^1_Y(\sR,\sO_Y(D))=0$. To do it, write again the defining sequence
 of $\sR$ as:
 \[
   0 \to \sR \to 3\sO_Y \to \sO_Y(C^\rt) \to 0.
 \]
 Applying $\Hom_Y(-,\sO_Y(D))$ to this sequence we get:
 \[
   \cdots \to 3\rH^1(\sO_Y(D)) \to \Ext^1_Y(\sR,\sO_Y(D)) \to
   \rH^2(\sO_Y(C+D-2H_Y)) {\color{red}\to} \cdots 
 \]
 Now, $\sO_Y(D)$ is Ulrich so $\rH^1(\sO_Y(D))=0$, and $H_Y-C-D=-E_4-E_5-E_6$:
 \[
   \rh^2(\sO_Y(C+D-2H_Y))=\rh^0(\sO_Y(H_Y-C-D))=0.
 \]

 This provides a lift $\tilde \eta : \sS|_Y \to \sO_Y(D)$ of $\hat \eta$ and again
 $\ker(\xi) \mono \sS|_Y$ induces an isomorphism on global sections,
 hence so does $\tilde \eta$.

 Finally we define $\zeta : \sS \to \sO_Y(D)$ as composition of the
 restriction $\sS \to \sS|_Y$ and $\tilde \eta$. Since
{ $H^*(\sS(-1))=H^*(\sS(-2))=0$ by Proposition \ref{syzygy of twisted}}, tensoring the Koszul resolution
 \eqref{koszulY} by $\sS$ we see that $\sS \to \sS|_Y$ induces an
 isomorphism on global sections. Therefore, so does $\zeta$ and the
 lemma is proved.
 \end{proof}

 Consider $D^\rt = 2H_Y-D$ and $\sG_{D^\rt} = \ker(3\sO_X \to \sO_Y(D))$. 
 Let $\sE = \ker(\zeta)$, so we have:
 \begin{equation}
   \label{ESD}
   0 \to \sE \to \sS \to \sO_Y(D) \to 0.   
 \end{equation}

 \begin{lem}  The sheaf $\sE$ is simple and
   has a Jordan-H\"older filtration :
   \begin{equation}
     \label{sE}
     0 \to \sG_{D^\rt} \to \sE \to \sG_C \to 0.     
   \end{equation}
   Also, we have:
   \[
     \sE^\vee \simeq \sS^\vee, \qquad \rp_{\sE}(t) = \ru(t-1),
     \qquad \ch(\sE)=2\gamma, \qquad \rH^*(\sE(-t))=0,
     \qquad \mbox{for $t=0,1,2$}.
   \]
 \end{lem}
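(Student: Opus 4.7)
The plan is to derive the Hilbert polynomial, cohomology vanishing, and the duality $\sE^\vee \simeq \sS^\vee$ directly from the defining sequence \eqref{ESD}, then to produce the filtration \eqref{sE} by the snake lemma, and finally to establish simplicity by ruling out a splitting.

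For the numerical and cohomological claims, I would exploit \eqref{ESD} and the Ulrich property of $\sO_Y(D)$ on the cubic surface $Y$. Subtracting Hilbert polynomials gives $\rp_\sE(t) = \ru(t-1)$. The long exact sequence combined with Proposition \ref{syzygy of twisted} (for $\sS$) and the Ulrich property of $\sO_Y(D)$ on $Y$ handles the vanishing in twists $-1, -2$; for the untwisted vanishing, the construction of $\zeta$ forces $\rH^0(\sE) = 0$, while the higher $\rH^i(\sS)$ vanish by passing through \eqref{hatSO} and Lemma \ref{hat S is stable} (using $\sG_C^\vee \simeq 3\sO_X$ and Serre duality to kill $\rH^4(\sG_C)$). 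For $\sE^\vee \simeq \sS^\vee$, I apply $\shom_X(-,\sO_X)$ to \eqref{ESD}: since $\sO_Y(D)$ is supported in codimension $2$, the sheaves $\sext^i_X(\sO_Y(D),\sO_X)$ vanish for $i\le 1$, and $\sext^1_X(\sS,\sO_X)=0$ since $\sS$ is locally free.

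To produce the filtration \eqref{sE}, note that both $\zeta$ and the evaluation $3\sO_X \to \sO_Y(D)$ induce isomorphisms on $\rH^0$ (the latter because $\sO_Y(D)$ is globally generated of rank one with three independent sections), so the restriction of $\zeta$ to the subsheaf $3\sO_X \subset \sS$ of \eqref{hatSO} agrees, up to an automorphism of $3\sO_X$, with evaluation. The snake lemma applied to the commutative diagram with rows \eqref{hatSO} and \eqref{ESD} then yields $0 \to \sG_{D^\rt} \to \sE \to \sG_C \to 0$.

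The main obstacle is simplicity, which hinges on the non-splitting of \eqref{sE}. If the extension split, Lemma \ref{hat S is stable} would give $\sE^{\vee\vee} \simeq \sG_C^{\vee\vee} \oplus \sG_{D^\rt}^{\vee\vee} \simeq 6\sO_X$, whereas the previous step gives $\sE^{\vee\vee}\simeq \sS^{\vee\vee} = \sS$ (as $\sS$ is locally free), and $\rh^0(\sS)=3$ prevents $\sS \simeq 6\sO_X$. Once non-splitting is established, the sheaves $\sG_C$ and $\sG_{D^\rt}$ are non-isomorphic stable objects with the same reduced Hilbert polynomial—corresponding to the distinct twisted cubic classes $C$ and $D^\rt$ in $Y$, hence to distinct points of the Lehn--Lehn--Sorger--van Straten eightfold—so $\Hom_X$'s between them vanish in both directions. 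A short diagram chase applying $\Hom_X(-,\sG_C)$, $\Hom_X(-,\sG_{D^\rt})$, and $\Hom_X(\sE,-)$ to \eqref{sE}, together with the nonzero connecting map $\End_X(\sG_{D^\rt}) \simeq \kk \to \Ext^1_X(\sG_C,\sG_{D^\rt})$ that sends $\id$ to the extension class, then sandwiches $\End_X(\sE)$ between $\Hom_X(\sE,\sG_{D^\rt})=0$ and $\Hom_X(\sE,\sG_C)=\kk$ and identifies it with $\kk$.
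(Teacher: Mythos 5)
Your proposal is correct and follows essentially the same route as the paper: extract the filtration from the $3\times 3$ diagram built on the evaluation maps, read off the Hilbert polynomial and cohomology vanishing from \eqref{ESD} (the paper reads the vanishing off \eqref{sE} instead, which is equivalent), and rule out splitting by dualizing \eqref{ESD} (the paper also records the mismatch $\sext^1_X(\sE,\sO_X)\simeq\sO_Y(D^\rt)$ versus $\sO_Y(C)\oplus\sO_Y(D^\rt)$, while your $\sE^{\vee\vee}\simeq\sS\not\simeq 6\sO_X$ works just as well). Your explicit $\Hom$-chase deducing simplicity from non-splitness and $\sG_C\not\simeq\sG_{D^\rt}$ just fills in a step the paper asserts without proof.
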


 \begin{proof}
   The sheaves $\sG_{D^\rt}$ and $\sG_C$ are stable by
   Lemma \ref{hat S is stable} and the reduced Hilbert polynomial of
   both of them is $\ru(t-1)$. Also, they are not isomorphic so
   $\Hom_X(\sG_{D^\rt},\sG_C)=\Hom_X(\sG_C,\sG_{D^\rt})=0$.

   {The sheaf $\sE$ is
   semistable and has reduced Hilbert polynomial $\ru(t-1)$ as soon
   as it fits in \eqref{sE}.
   Also, $\sE$ is simple if
   this sequence is non-split.
   To see this, first apply $\Hom_X(-,\sG_{D^\rt})$ to \eqref{sE} and note
   that the identity of $\sG_{D^\rt}$ is carried to the non-split extension represented
   by $\sE$. Hence, using that $\sG_{D^\rt}$ is simple and $\Hom_X(\sG_C,\sG_{D^\rt})=0$ we get 
   $\Hom_X(\sE,\sG_{D^\rt})=0$.
   Then, apply $\Hom_X(\sG_C,-)$ to \eqref{sE} and use that
   $\Hom_X(\sG_C,\sG_{D^\rt})=0$ and that $\sG_C$ is simple to get
   that $\Hom_X(\sG_C,\sE)$ is at most $1$-dimensional.
   Finally, apply $\Hom_X(-, \sE)$ to \eqref{sE} to deduce that
   $\End_X(\sE)$ is included $\Hom_X(\sG_C,\sE)$ so that $\End_X(\sE)$
   is $1$-dimensional and $\sE$ is simple.}
    
   The Chern character of $\sE$ is also computed from \eqref{sE} --
   recall the notation ${ \gamma=3-H_X^2+\frac {1}{4} H^4_X}$ from the Introduction.
   Moreover, by Lemma \ref{hat S is stable}, we get $\rH^*(\sE(-t))=0$,
   for $t=0,1,2$ as well by \eqref{sE}.

   Summing up, it suffices to prove that $\sE$ fits in \eqref{sE}
   and that this sequence is non-split. To do it, use the previous
   lemma to show that the evaluation of global sections gives an exact
   commutative diagram:
   \[
     \xymatrix@-2ex{
       & 0 \ar[d] & 0\ar[d] & & \\
       0 \ar[r] & \sG_{D^\rt} \ar[r] \ar[d] & \sE \ar[r] \ar[d]
       & \sG_C \ar[r] \ar@{=}[d] & 0 \\
       0 \ar[r] &  3\sO_X \ar[r] \ar[d] & \sS \ar[r] \ar[d]  & \sG_C \ar[r] & 0 \\
       &  \sO_Y(D) \ar@{=}[r] \ar[d] & \sO_Y(D) \ar[d] & & \\
       & 0 & 0 & & 
}
\]

We thus have \eqref{sE}. By contradiction, assume that it
splits as $\sE \simeq \sG_C \oplus \sG_{D^\rt}$.
Note that, since $\sS$ is locally free, \eqref{ESD} gives:
\[
  \sE^\vee \simeq \sS^\vee, \qquad  \sext^1_X(\sE,\sO_X) \simeq
  \sext_X^2 (\sO_Y(D), \sO_X) \simeq
  \sO_Y(D^\rt).
\]
On the other hand, if $\sE \simeq \sG_C \oplus \sG_{D^\rt}$
then by Lemma \ref{hat S is stable} we would have $\sE^\vee \simeq
6\sO_X$ and $\sext_X^1(\sE,\sO_X) \simeq \sO_Y(C) \oplus \sO_Y(D^\rt)$, which is not the case.
 \end{proof}

\section{Smoothing the modified sheaves} \label{section:smoothing}

 In the previous section we constructed a simple and semistable sheaf
 $\sE$ with $\rp_{\sE}(t) = u(t-1)$. In particular, the sheaf $\sE(1)$
 has the same reduced Hilbert polynomial as an Ulrich bundle
 $\sU$. However,  $\sE(1)$ itself cannot be Ulrich: for instance it is not locally free since $\sext_X^1(\sE,
 \sO_X) \simeq \sO_Y(D^\rt)$. 
The goal of this section is to show that $\sE(1)$ admits a flat deformation to an Ulrich bundle.

 \subsection{The Kuznetsov category}

 The bounded derived category $\rD(X)$ of coherent sheaves on $X$ has
 the semiorthogonal decomposition:
 \[
   \langle \Ku(X),\sO_X , \sO_X(1), \sO_X(2)\rangle,
 \]
 where $\Ku(X)$ is a K3 category. Indeed, $\Ku(X)$ equips with the K3-type Serre duality
 \[
 \Ext^i (\sF, \sG) \simeq \Ext^{2-i} (\sG, \sF)^{\vee}
 \]
 for any $\sF, \sG \in \Ku(X)$ \cite{Kuz04}.  We have:
 \[
   \rH^*(X,\sE)=\rH^*(X,\sE(-1))=\rH^*(X,\sE(-2))=0,
 \]
and therefore:
 \[
   \sE \in \Ku(X).
 \]
 Lemma \ref{hat S is stable} says that for a Cohen-Macaulay twisted
 cubic $C \subset X$ spanning an irreducible cubic surface we have
 that $\sG_C$ is stable and:
 \[
   \sG_C \in \Ku(X).
 \]
 We also know that $\sG_C$ represents a point of the
 Lehn-Lehn-Sorger-van Straten irreducible symplectic eightfold $\sZ$ and
 that $\sZ$ contains a Zariski-open dense subset $\sZ^\circ$ whose
 points are in bijection with the sheaves of the form $\sG_C$  \cite{LLSS17,LLMS18}.
 
 \begin{lem}
   We have $\rh^3(\sE(-3)) = \rh^4(\sE(-3)) = 3$, $\ext^i_X (\sE,
   \sE)=0$ for $i \ge 3$ and:
   \[
     \ext^1_X (\sE, \sE)=26, \qquad 
     \ext^2_X (\sE, \sE)=1.
   \]
 \end{lem}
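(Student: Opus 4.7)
The plan is to prove the two parts of the statement separately. For the cohomology of $\sE(-3)$, I will twist the defining sequence $0 \to \sE \to \sS \to \sO_Y(D) \to 0$ by $\sO_X(-3)$ and exploit the resulting long exact sequence, after first computing the two outer cohomology groups. For the Ext groups, I will use crucially that $\sE \in \Ku(X)$ together with the K3-category structure of $\Ku(X)$, combined with a Riemann--Roch style computation exploiting the filtration $0 \to \sG_{D^\rt} \to \sE \to \sG_C \to 0$.

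For $\rH^*(\sS(-3))$, the ACM property of $\sS$ kills $\rH^i$ for $1 \leq i \leq 3$, so only $\rh^0$ and $\rh^4$ remain; Serre duality on $X$ converts these into $\rh^4(\sS^\vee)$ and $\rh^0(\sS^\vee)$. To compute these, I would dualize \eqref{hatSO} using the identifications $\sG_C^\vee \simeq 3\sO_X$ and $\sext^1_X(\sG_C,\sO_X) \simeq \sO_Y(C) = \sI_{C^\rt/Y}(2)$ from Lemma~\ref{hat S is stable}, producing the symmetric exact sequence $0 \to 3\sO_X \to \sS^\vee \to \sG_{C^\rt} \to 0$. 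Since $\rH^*(\sG_{C^\rt}) = 0$, this yields $\rh^0(\sS^\vee) = 3$ and $\rh^4(\sS^\vee) = 0$, hence $\rh^0(\sS(-3)) = 0$ and $\rh^4(\sS(-3)) = 3$. For $\rH^*(\sO_Y(D - 3H_Y))$, Serre duality on $Y$ (with $\omega_Y \simeq \sO_Y(-H_Y)$) identifies it with the dual of $\rH^{2-*}(\sO_Y(D^\rt))$, and since $\sO_Y(D^\rt)$ is Ulrich only $\rh^2 = 3$ survives. Assembling the long exact sequence gives $\rh^i(\sE(-3)) = 0$ for $i \leq 2$, $\rh^3(\sE(-3)) = 3$ via the connecting map from $\rH^2(\sO_Y(D-3H_Y))$, and $\rh^4(\sE(-3)) = \rh^4(\sS(-3)) = 3$.

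For the Ext groups, membership $\sE \in \Ku(X)$ together with the K3-type Serre duality yields $\Ext^i_X(\sE,\sE) \simeq \Ext^{2-i}_X(\sE,\sE)^\vee$, so positivity of sheaf Ext forces vanishing for all $i \geq 3$. Simplicity of $\sE$ gives $\ext^0 = \ext^2 = 1$, so everything reduces to computing $\chi(\sE,\sE)$. Since $\chi$ depends only on the $K$-theory class, the filtration yields $\chi(\sE,\sE) = 4\,\chi(\sG_C,\sG_C)$: the cross terms $\chi(\sG_C,\sG_{D^\rt})$ and $\chi(\sG_{D^\rt},\sG_C)$ coincide with $\chi(\sG_C,\sG_C)$ because $\sG_C$ and $\sG_{D^\rt}$ share the same numerical invariants as points of the irreducible eightfold $\sZ$. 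Finally $\chi(\sG_C,\sG_C) = 1 - 8 + 1 = -6$ using Lemma~\ref{hat S is stable} plus the fact that the tangent space to $\sZ$ at the (smooth) point $\sG_C$ has dimension $8 = \dim \sZ$. Thus $\chi(\sE,\sE) = -24$ and $\ext^1(\sE,\sE) = 26$.

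The most delicate point I anticipate is the numerical equality $\chi(\sG_C,\sG_{D^\rt}) = -6$ for the cross terms: it relies on the fact that $\sZ$ is connected, so all its points carry sheaves with the same Chern character, making the Euler pairing a purely numerical invariant of the common Mukai vector.
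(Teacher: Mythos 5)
Your proposal is correct and follows essentially the same route as the paper: twist the modification sequence $0 \to \sE \to \sS \to \sO_Y(D) \to 0$ by $\sO_X(-3)$ for the cohomology computation, then combine simplicity of $\sE$, the K3-type Serre duality in $\Ku(X)$, and Riemann--Roch for the Ext groups. The only (harmless) variations are that the paper gets $\rh^4(\sS(-3))=3$ directly from $\chi(\sS(-3))=6\rp_\sS(-3)=3$ rather than by dualizing \eqref{hatSO} (where your identification of the kernel with $\sG_{C^\rt}$ tacitly uses $\rH^1(\sS^\vee)=0$ to see that the connecting map is an isomorphism on global sections), and that it leaves the Riemann--Roch step $\chi(\sE,\sE)=-24$ implicit, which you correctly unpack via additivity of $\chi$ on the filtration \eqref{sE} and $\chi(\sG_C,\sG_C)=-6$.
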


 \begin{proof}
 First note that $\rh^3 (\sE(-3)) = \rh^2(\sO_Y(D-3H_Y))=3$ 
 since $\sO_Y(D)$ is an Ulrich line bundle on a cubic surface
 $Y$. We also have $\rh^4(\sE(-3))= \rh^4 (\sS(-3))=3$
 since $\chi(\sS(-3)) = 6 \rp_\sS (-3) = 3$ and $\rh^i(\sS(-3))=0$ for $i<4$.  

 Recall that $\sE$ is a simple sheaf, $\ch(\sE) = 2\gamma$, and that $\sE$ lies in
 $\Ku(X)$. Since $\Ku(X)$ is a K3 category, we have
 $\ext^2_X (\sE, \sE)=\hom_X(\sE,\sE)=1$ and $\ext^i_X (\sE, \sE)=0$
 for $i \ge 3$.
 Now the equality $\ext^1_X (\sE, \sE)=26$ follows from Riemann-Roch: 
 
\[
{\chi(\sE \otimes \sE^{\vee}) = \left[ \left( 36 - 24H_X^2 + 10H_X^4 \right) \left( 1 + \frac{3}{2}H_X + \frac{5}{4}H_X^2 + \frac{3}{4}H_X^3 + \frac{1}{3} H_X^4 \right) \right]_4 =  -24.}
\]

\end{proof}

\subsection{Deforming to Ulrich bundles}

We assume in this subsection that $X$ does not contain an integral surface of degree up to $3$ other than linear sections. In other words, $X$
does not contain a plane (equivalently, a quadric surface) nor a smooth cubic scroll, nor a cone over a rational normal cubic curve.

The content of our main result is that there is a smooth
connected quasi-projective variety $T^\circ$ of 
dimension $26$ and a sheaf $\sF$ on $T^\circ \times X$, flat over $T^\circ$,
together with a distinguished point $s_0 \in T^\circ$ such that $\sF_{s_0}
\simeq \sE$ and such that $\sF_s(1)$ is an Ulrich bundle on $X$, for all
$s$ in $T^\circ \setminus \{s_0\}$ -- here we write 
$\sF_s = \sF|_{\{s\} \times X}$ for all $s \in T^\circ$.
Stated in short form this gives the next result. 

\begin{thm} \label{theorem:no_plane}
If $Y$ is smooth, then the sheaf $\sE(1)$ deforms to an Ulrich bundle on $X$.
\end{thm}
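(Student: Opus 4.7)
The overall strategy, standard in this deformation-theoretic paradigm, is to show that $\sE$ defines a smooth point of the moduli space of simple sheaves on $X$ and that the Ulrich conditions for $\sF(1)$ cut out a dense open subset in a neighborhood. First I would establish that the moduli space $T$ of simple sheaves on $X$ is smooth at $[\sE]$ of dimension $\ext^1_X(\sE, \sE) = 26$. Although $\ext^2_X(\sE, \sE) = 1$ supplies an a~priori non-trivial obstruction space, the deformation--obstruction theory of \cite{KM09, BLMNPS19} applies because $\sE$ lies in the K3 category $\Ku(X)$: the actual obstructions to deforming a simple object in such a category vanish, making $T$ smooth of the expected dimension. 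Letting $\sF$ denote the universal family on $T \times X$, upper semicontinuity then propagates the vanishings $\rH^*(\sF_s(-t)) = 0$ for $t \in \{0,1,2\}$ (which hold at $s_0 = [\sE]$ because $\sE \in \Ku(X)$) and $\rh^i(\sF_s(-3)) = 0$ for $i \in \{0,1,2\}$ (which hold at $s_0$ by the long exact sequence of $0 \to \sE \to \sS \to \sO_Y(D) \to 0$, the ACM property of $\sS$, and the Ulrich property of $\sO_Y(D)$ on $Y$) to a Zariski neighborhood $T^\circ$ of $s_0$.

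Since $\rp_\sE(-3) = \ru(-4) = 0$, the Euler characteristic $\chi(\sF_s(-3))$ is identically zero on $T$, so the six vanishings above force $\rh^3(\sF_s(-3)) = \rh^4(\sF_s(-3))$ on $T^\circ$, and by Serre duality both equal $\hom_X(\sF_s, \sO_X)$. The proof thus reduces to exhibiting a single $s \in T^\circ$ with $\hom_X(\sF_s, \sO_X) = 0$; upper semicontinuity then supplies an open set of such $s$, each of which makes $\sF_s(1)$ an Ulrich bundle, since ACM on smooth $X$ is locally free. To find such $s$, I would analyze the Yoneda composition pairing
\[
\mu : \Hom_X(\sE, \sO_X) \otimes \Ext^1_X(\sE, \sE) \longrightarrow \Ext^1_X(\sE, \sO_X).
\]
By smoothness of $T$, every tangent vector $\sigma \in \Ext^1_X(\sE, \sE) \simeq T_{s_0} T$ integrates to an actual deformation, and the obstruction to a homomorphism $\phi : \sE \to \sO_X$ lifting to first order along $\sigma$ is precisely the class $\mu(\phi, \sigma)$. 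Because $\Hom_X(\sE, \sO_X)$ and $\Ext^1_X(\sE, \sO_X)$ both have dimension three, it will suffice to exhibit one $\sigma$ for which $\mu(-, \sigma) : \Hom_X(\sE, \sO_X) \to \Ext^1_X(\sE, \sO_X)$ is an isomorphism; the vanishing of its $3 \times 3$ determinant is a closed condition on $\Ext^1_X(\sE,\sE)$, so once ruled out at one point it is ruled out on a dense open subset, and the corresponding deformations send $\hom_X(-, \sO_X)$ to zero.

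The main obstacle is verifying this non-degeneracy of $\mu$ at a single point. The plan is to reduce it to the building blocks of $\sE$ via the two short exact sequences
\[
0 \to 3\sO_X \to \sS \to \sG_C \to 0, \qquad 0 \to \sG_{D^\rt} \to \sE \to \sG_C \to 0,
\]
exploiting that the three distinguished sections of $\sE^\vee$ responsible for $\hom_X(\sE,\sO_X) = 3$ originate from the trivial summand $3\sO_X \subset \sS$, and that deformations moving the two twisted cubics $C, D$ inside the Lehn--Lehn--Sorger--van Straten eightfold $\sZ$ furnish a rich family of tangent vectors $\sigma$. A careful Yoneda diagram chase along these sequences, together with the computation $\sext^1_X(\sE, \sO_X) \simeq \sO_Y(D^\rt)$ from Section~\ref{section:twisted cubics} and the analogous sheaf-Ext data for $\sG_C$ and $\sG_{D^\rt}$, is expected to exhibit $\sigma$ with $\mu(-, \sigma)$ an isomorphism, thereby completing the proof.
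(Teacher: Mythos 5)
Your first half tracks the paper's own Steps 1--2 closely: unobstructedness of $\sE$ via the Atiyah class/Kodaira--Spencer argument of \cite{KM09,BLMNPS19} using $\sE\in\Ku(X)$, semicontinuity of the vanishings $\rH^*(\sF_s(-t))=0$ for $t=0,1,2$ and of $\rh^i(\sF_s(-3))=0$ for $i\le 2$, and the reduction, via $\chi(\sF_s(-3))=6\ru(-4)=0$ and Serre duality, to producing one $s$ with $\hom_X(\sF_s,\sO_X)=0$. That reduction is sound and is also how the paper concludes (it writes $\rh^4(\sF_s(-3))=\rh^0(\sF_s^\vee)$). The divergence, and the gap, is in how you propose to kill $\hom_X(\sF_s,\sO_X)$. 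The paper does it by showing the generic deformation is \emph{reflexive}: it proves that any non-reflexive $\sF_s$ near $s_0$ has torsion quotient $\sQ_s=\sF_s^{\vee\vee}/\sF_s$ equal to an Ulrich line bundle on a cubic surface section, hence sits in an extension $0\to\sG_{D_s^\rt}\to\sF_s\to\sG_{C_s}\to 0$ of Lehn--Lehn--Sorger--van Straten sheaves; these form a family of dimension $2\cdot 8+6-1=21<26=\ext^1_X(\sE,\sE)$, so the generic $\sF_s$ is reflexive, and then semistability of $\sF_s^\vee$ forces $\rh^0(\sF_s^\vee)=0$. None of this analysis of $\sQ_s$ appears in your plan, and it is where the hypothesis on surfaces of low degree in $X$ (equivalently, smoothness of $Y$ and its deformations) actually enters.

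Your substitute --- exhibiting $\sigma\in\Ext^1_X(\sE,\sE)$ with $\mu(-,\sigma):\Hom_X(\sE,\sO_X)\to\Ext^1_X(\sE,\sO_X)$ an isomorphism --- is left entirely unverified (``is expected to exhibit $\sigma$''), and the specific source of tangent vectors you point to is provably the wrong one. Deformations obtained by moving $C$ and $D$ in $\sZ$ (and varying the extension class) sweep out exactly the $21$-dimensional family $\sW$ above, and every member $\sF_s$ of that family surjects onto some $\sG_{C_s}$ with $\sG_{C_s}^\vee\simeq 3\sO_X$, so $\hom_X(\sF_s,\sO_X)\ge 3$ identically along $\sW$; by upper semicontinuity it equals $3$ there near $s_0$, so every $\phi\in\Hom_X(\sE,\sO_X)$ lifts to first order along every $\sigma\in T_{s_0}\sW$, i.e.\ $\mu(\phi,\sigma)=0$ for all such $\sigma$. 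A useful $\sigma$ must therefore come from the residual $5$ dimensions of $\Ext^1_X(\sE,\sE)$, and controlling those directions (and showing they really do break all three sections at once, not just reduce their number) is essentially equivalent in difficulty to the reflexivity/dimension-count argument the paper runs. As written, the proposal has a genuine hole at its crux; the earlier steps are correct but are also the ones the paper already supplies.
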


\begin{proof} We divide the proof into several steps.
  \begin{step} \label{step:cohomology}
    Compute negative cohomology of $\sE$, i.e. $\rh^k(\sE(-t))$ for
    $t \gg 0$ and $k \in \{0,1,2,3\}$.
  \end{step}

  Let $C \subset Y \subset X$ be a twisted cubic with $Y$ smooth.
  The sheaf $\sG_C$ is stable and lies in $\Ku(X)$ by Lemma
  \ref{hat S is stable}. 
  We note that $\rh^1(\sO_Y(D+t H_Y))=0$ for $t \in \ZZ$, while
  ${\rh^2(\sO_Y(D-t H_Y))=0}$ for $t \le 1$ while:
  \[
    \rh^2(\sO_Y(D-t H_Y))=\frac 32 (t-1)(t-2), \qquad \mbox{for $t
      \ge 2$}.
  \]
  Also, $\rH^0(\sE)=\rH^1(\sE)=0$ since the surjection \eqref{ESD} induces an
  isomorphism on global sections. This also implies that, since
  $\rH^0(\sO_Y(D)) \otimes 
  \rH^0(\sO_X(t))$ generates $\rH^0(\sO_Y(D+t H_Y))$ for all $t \ge 0$,
  the map $\rH^0(\sS(t)) \to \rH^0(\sO_Y(D+t H_Y))$ induced by
  \eqref{ESD} is surjective. Since
  $\rH^1(\sS(t))=0$ for all $t \in \ZZ$, we obtain $\rH^1(\sE(t))=0$ for $t
  \in \ZZ$.
  By \eqref{ESD} we have:
  \[\left\{
    \begin{array}{ll}
     \rh^0(\sE(-t))=0, & \mbox{$t \ge 0$}, \\
      \rh^1(\sE(t))=0, & \mbox{$t \in \ZZ$},\\
      \rh^2(\sE(t))=0, & \mbox{$t \in \ZZ$},\\
     \rh^3(\sE(-t))=\frac 32 (t-1)(t-2), & \mbox{$t \ge 2$}.
    \end{array}\right.
   \]
  
   \begin{step}  \label{step:deformation}
     Argue that $\sE$ is unobstructed.
   \end{step}

   This follows from the argument of \cite[\S 31]{BLMNPS19}, which
   applies to the sheaf $\sE$ as it is simple and lies in
   $\Ku(X)$. 
   To sketch this, recall that the framework is based on a combination
   of Mukai's unobstructedness 
   theorem \cite{mukai:symplectic} and Buchweitz-Flenner's approach
   to the deformation theory of $\sE$, see \cite{BF00,BF03}.
   To achieve this step, we use the proof of \cite[Theorem
   31.1]{BLMNPS19} which goes as follows. Let $\At(\sE) \in
   \Ext^1_X(\sE,\sE \otimes \Omega_X)$ be the Atiyah class of $\sE$.
   
   \begin{itemize}
   \item Via a standard use of the infinitesimal lifting
   criterion, one reduces to show that $\sE$ has a
   formally smooth deformation space.
   \item We show that the deformation space of $\sE$ is formally
     smooth by observing that $\sE$ extends over any square-zero
     thickening of $X$, conditionally to
     the vanishing of the product of the Atiyah class $\At(\sE)$ and the 
     Kodaira-Spencer class $\kappa$ of the thickening, see 
   \cite{huybrechts-thomas:deformation} -- note that this holds
   in arbitrary characteristic.
 \item We use \cite{KM09} in order to show $\kappa \cdot \At(\sE)
   = 0$. Indeed, in view of \cite[Theorem 4.3]{KM09}, this takes place if
   the trace $\tr(\kappa \cdot \At^2(\sE))$ vanishes as element of
   $\rH^3(\Omega_X)$.

 \item We use that $\tr(\kappa \cdot \At(\sE)^2)=2\kappa \cdot
   \ch(\sE)$, (cf. the proof of \cite[Theorem 31.1]{BLMNPS19}) and
   note that this vanishes as the Chern character 
   $\ch(\sE)$ remains algebraic under any deformation of $X$. It holds 
   as all components of $\ch(\sE)$ are multiples of powers of the hyperplane
   class, while $\kappa \cdot \ch(\sE)$ is the obstruction to
   algebraicity of $\ch(\sE)$ along the thickening of $X$ -- again,
   cf. the  proof of \cite[Theorem 31.1]{BLMNPS19}.
   
   Note that the assumption that that $\kk$ has characteristic other
   than $2$ is needed to use the formula $\tr(\kappa \cdot
   \At(\sE)^2)=2 \tr(\kappa \cdot \exp(\At(\sE)))=2\kappa \cdot
   \ch(\sE)$. 
   \end{itemize}

   According to the above deformation argument, there is a smooth quasi-projective scheme $T$ representing an open piece of the
   moduli space of simple sheaves over $X$ containing $\sE$. In other
   words, 
   there is a point $s_0 \in T$, together with a coherent sheaf
   $\sF$ over $T \times X$, such that $\sF_{s_0} \simeq \sE$, and the
   Zariski tangent space of $T$ at $s_0$ is identified with
   $\Ext^1_X(\sE,\sE)$. 
   Note that all the sheaves $\sF_s$ are simple.
   By the openness of semistability and torsion-freeness, there is a
   connected open dense subset $T_0 \subset T$, with $s_0
   \in T_0$, such that $\sF_s$ is simple, semistable and torsion-free
   for all $s \in T_0$.

   \begin{step} \label{step:hull}
     Compute the cohomology of the reflexive hull $\sF_s^{\vee \vee}$ of
     $\sF_s$ and of $\sF_s^{\vee \vee}/\sF_s$.
   \end{step}
   For $s \in T_0$, let us consider the reflexive hull
  $\sF_s^{\vee \vee}$ and the torsion sheaf $\sQ_s = \sF_s^{\vee \vee}/\sF_s$. Let us write the reflexive hull sequence:
  \begin{equation} \label{Fs**}
    0 \to \sF_s \to \sF_s^{\vee \vee} \to \sQ_s \to 0.    
  \end{equation}
  
  By the upper-semicontinuity of cohomology, there is a Zariski-open dense
  subset $s_0 \in T_1$
  of $T_0$ such that for all $s \in T_1$ we have:
  \[
    \rH^*(\sF_s) = \rH^*(\sF_s(-1)) = \rH^*(\sF_s(-2)) = 0.
  \]
 In particular, for $s \in T_1$ and $t \ge 0$:
  \[
      \rh^k(\sF_s(-t))=0, \qquad
      \mbox{for $k \le 2$}.
   \]
   By Lemma \ref{reflexive}, since $\sF_s$ is torsion-free there is a
   polynomial $q_2 \in \QQ[t]$, with $\deg(q_2) \le 2$, such that
   $\rh^3(\sF_s(-t))=q_2(t)$ for $t \gg 0$. By semicontinuity,
   there is a Zariski-open dense subset $T_2$ of $T_1$, with $s_0 \in T_2$, such that for all
   $s \in T_2$ we have $q_2(t) \le \frac 32 (t-1)(t-2)$.
   
  Since $\codim(\sQ_s) \ge 2$, we get $\rH^k(\sQ_s(t))=0$ for each $k
  \ge 3$ and $t \in \ZZ$.  Using Lemma \ref{reflexive}, we get the vanishing
  $\rH^0(\sF_s^{\vee \vee}(-t))=\rH^1(\sF_s^{\vee \vee}(-t))=0$ for $t
  \gg 0$, and the existence of polynomials $q_0,q_1 \in \QQ[t]$ with $\deg(q_k) \le k$
  such that $\rh^{k+2}(\sF_s^{\vee \vee}(-t))=q_k(t)$ for $t \gg 0$.
  By \eqref{Fs**}, for $t \gg 0$ and $k \ne 2$ we have:
  \[\left\{
      \begin{array}{l}
        \rh^2(\sQ_s(-t))=q_2(t)-q_1(t)+q_0, \\
        \rh^k(\sQ_s(-t))= 0.
    \end{array}\right.
  \]

  Next, we use again the local-global spectral sequence
  \[
    \Ext_X^{p+q}(\sQ_s,\sO_X(t-3)) \Leftarrow \rH^p(\sext_X^q(\sQ_s,\sO_X(t-3))) = E_2^{p,q}.
  \]
  Via Serre vanishing for $t \gg 0$ and Serre duality this gives:
  \begin{align}
    \nonumber &\rh^2(\sQ_s(-t)) = \rh^0(\sext_X^2(\sQ_s,\sO_X(t-3))), \\
    \label{extQ} &\sext_X^k(\sQ_s,\sO_X)=0, \quad \mbox{for $k \ne 2$}.
  \end{align}

  Assume $\sQ_s \ne 0$. By the above discussion, $\sQ_s$ is a non-zero reflexive sheaf supported
  on a codimension 2 subvariety $Y_s$ of $X$, in which case 
  $h^2 (\sQ_s (-t))$ must agree with a polynomial function of degree
  $2$ of
  $t$ for $t \gg 0$.
  Hence the
  sheaf $\hat \sQ_s = \sext_X^2(\sQ_s,\sO_X(-3))$ supported on $Y_s$ satisfies:
  \begin{equation}
    \label{boundchi}
    \chi(\hat \sQ_s(t)) = h^2(\sQ_s(-t)) = q_2(t)-q_1(t)+q_0 \le \frac 32 (t-1)(t-2),
    \quad \deg(\chi(\hat \sQ_s(t)))=2.
  \end{equation}

  Note that \eqref{extQ} and \cite[Proposition 1.1.10]{HL10} imply 
  $\sext_X^k(\sF_s^{\vee \vee},\sO_X) = 0$ for $k \ge 3$ and therefore,
  via \eqref{Fs**},
  also $\sext_X^k(\sF_s,\sO_X)=0$ for $k \ge 3$.
  We prove along the way that:
  \begin{equation}
    \label{local Ext2}
    \rH^1(\sext_X^2(\sF_s(t),\sO_X))= 0, \qquad \mbox{for $t \gg 0$.}
  \end{equation}
  Indeed, dualizing 
  \eqref{Fs**} and using \eqref{extQ} we get an epimorphism, for $t
  \in \ZZ$:
  \[
    \sext_X^2(\sF_s^{\vee \vee}(t),\sO_X) \epi  \sext_X^2(\sF_s(t),\sO_X).
  \]
  By \cite[Proposition 1.1.10]{HL10}, if the sheaf
  $\sext_X^2(\sF_s^{\vee \vee},\sO_X)$ is non-zero then it is
  supported on a zero-dimensional subscheme of $X$, hence the same
  happens to $\sext_X^2(\sF_s,\sO_X(-t))$ by the above epimorphism.
  Therefore $\rH^1(\sext_X^2(\sF_s,\sO_X(-t)))=0$ for $t \gg 0$.
  
  \begin{step} \label{step:extension}
    Show that, if $\sF_s$ is not reflexive, then it is an extension of
    sheaves coming from $\sZ^\circ$.
  \end{step}
  We have proved that, if $\sF_s$ is not reflexive, the support $Y_s$
  of $\hat \sQ_s$ is a surface of degree at most 
  $3$. But then, since $X$ contains no integral surface of degree up to $3$
  other than complete intersections, the reduced structure of each
  primary component of $Y_s$ must be a
  cubic surface contained in $X$, and hence $Y_s$ itself must be a cubic surface.
  So the open subset $T_2
  \subset T_1$ provides a family of cubic surfaces $\sY \to T_2$ whose
  fibre over $T_2$ is the cubic surface $Y_s$, where $Y_{s_0}=Y$
  is smooth.
  Since smoothness is an open
  condition, there is a Zariski-open dense subset $T_3$ of $T_2$, with
  $s_0 \in T_3$, such
  that $Y_s$ is smooth for all $s \in T_3$.

  It follows again by \eqref{boundchi} that $\sQ_s$ is reflexive of rank
  $1$ over $Y_s$, i.e. $\sQ_s$ is a line bundle on $Y_s$ since $Y_s$ is smooth.
  Hence, we have a family of sheaves $\{\sQ_s \mid s \in T_3 \}$ where
  $\sQ_s$ is a line bundle over $Y_s$ and $\sQ_{s_0}\simeq \sO_Y(D)$. Since the Picard
  group of $Y_s$ is discrete, this family must be locally constant. In
  other words, for each $s \in T_3$ there is a divisor class $D_s$ on $Y_s$
  corresponding to a twisted
  cubic contained in $Y_s$ such that $\sQ_s \simeq \sO_{Y_s}(D_s)$ and $D_{s_0} \equiv D$.

  Since $\rH^0(\sF_s)=\rH^1(\sF_s)=0$, the evaluation map of global
  sections $3\sO_X \to \sO_{Y_s}(D_s)$ lifts to a non-zero map
  $\beta_s : 3 \sO_X \to \sF_s^{\vee \vee}$. The snake lemma yields an exact
  sequence:
  \[
    0 \to \ker(\beta_s) \to \sG_{D_s^\rt} \to \sF_s \to \coker(\beta_s) \to 0.
  \]
  Since the sheaves $\sF_s$ and $\sG_{D_s^\rt}$ share the same reduced
  Hilbert polynomial, with $\sF_s$ semistable and $\sG_{D_s^\rt}$
  stable, we must have $\ker(\beta_s)=0$. By semistability of $\sF_s$,
  we note that $\sD_s=\coker(\beta_s)$ is
  torsion-free, since otherwise the reduced Hilbert polynomial of the
  torsion-free part of $\sD_s$ would be strictly smaller than
  $\rp_{\sD_s} = \rp_{\sF_s}$.  

  Therefore, we have a flat family of sheaves over $T_3$ whose fibre
  over $s$ is $\sD_s$, with $\sD_{s_0} \simeq \sG_C$. Hence, for
  all $s \in T_3$, the sheaf $\sD_{s}$ corresponds to a point of the
  open part $\sZ^\circ$ of the 
  Lehn-Lehn-Sorger-van Straten eightfold, i.e. $\sD_{s_0} \simeq \sG_{C_s}$ for some twisted cubic $C_s \subset X$.
  We take a further Zariski-open dense subset $T_4$ of $T_3$ such that 
  $C_s$ is Cohen-Macaulay and spans a smooth cubic surface, for all $s
  \in T_4$.

  Summing up, in a Zariski-open neighbourhood $T_4$ of $s_0$, dense in
  $T$, the
  hypothesis $\sQ_s \ne 0$ for $s \in T_4$ implies the existence of
  twisted cubics 
  $D_s$, $C_s$ in $X$ such that $\sF_s$ fits into:
  \[
    0 \to \sG_{D_s^\rt} \to \sF_s \to \sG_{C_s} \to 0,
  \]
  where the twisted cubic $C_s$ is Cohen-Macaulay, so that $\sG_{C_s}$
  lies in $\Ku(X)$. Therefore the sheaves $\sG_{C_s}$ and
  $\sG_{D_s^\rt}$ correspond uniquely to well-defined points of $\sZ^\circ$.

  \begin{step} \label{step:ulrich}
    Conclude that $\sF_s(1)$ is Ulrich for generic $s \in T$.
  \end{step}

  We compute the dimension of the family $\sW$ of
  sheaves $\sF_s$ fitting into extensions as in the previous display.
  Indeed, $\sW$ is
  equipped with a regular map $\sW \to \sZ^\circ \times \sZ^\circ$
  defined by $\sF_s \mapsto (\sG_{D_s^\rt},\sG_{C_s})$,
  whose fibre is $\PP(\Ext^1_X(\sG_{C_s}, \sG_{D_s^\rt}))$. Since
  $D^\rt=D_{s_0}^\rt$ and $C=C_{s_0}$ are contained in $Y$ and satisfy
  $C \cdot D^\rt=4$, $C \cdot C^\rt=5$, we have $C \not \equiv D^\rt$ so $\sG_{D_{s_0}^\rt} \not \simeq \sG_{C_{s_0}}$.
  Therefore $\sG_{D_s^\rt} \not \simeq \sG_{C_s}$ for all
  $s$ in a Zariski-open dense subset $T_5 \subset T_4$, with $s_0 \in T_5$.
  Since $\sG_{D_s^\rt}$ and $\sG_{C_s}$ lie in $\Ku(X)$ and
  represent stable non-isomorphic sheaves, we have:
  \[
    \hom_X(\sG_{C_s}, \sG_{D_s^\rt})=0, \qquad
    \ext^2_X(\sG_{C_s}, \sG_{D_s^\rt}) \simeq 
    \hom_X(\sG_{D_s^\rt}, \sG_{C_s})=0.
  \]
  Also, $\ext^k_X(\sG_{C_s}, \sG_{D_s^\rt})=0$ for $k \ge 3$ hence :
  \[
    \ext_X^1(\sG_{C_s}, \sG_{D_s^\rt})=-\chi(\sG_{C_s}, \sG_{D_s^\rt}) = 6.
  \]
  Therefore the fibre of $\sW \to \sZ^\circ \times \sZ^\circ$ is $5$-dimensional and:
  \[
    \dim(\sW)=2 \cdot \dim(\sZ^\circ) + \ext^1_X (\sG_{C_s}, \sG_{D_s^\rt}) -1 = 21.
  \]

  So there is a Zariski-open dense
  subset $T_6 \subset T_5$ with $s_0 \in T_6$, such that $\sQ_s=0$ for
  all $s \in T_6 \setminus \{s_0\}$. Hence $\sF_s$ is reflexive for
  all $s \in T_6 \setminus \{s_0\}$.
  Then  $\sF_s^\vee$ is also semistable and shares
  the same reduced Hilbert polynomial as $\sF_s$, hence we have:
  \[
    \rh^4(\sF_s(-3))=\ext^4_X(\sO_X(3),\sF_s) = \rh^0(\sF_s^\vee)=0.
  \]
  Since $\rh^k(\sF_s(-3))=0$ for $k \le 2$, by Riemann-Roch we obtain
  $\rh^3(\sF_s(-3))=0$, i.e. $\rH^*(\sF_s(-3))=0$. Now we have proved
  that $\sF_s(1)$ is Ulrich for $s \in T_6 \setminus \{s_0\}$.

  Put $T^\circ=T_6$. We have proved that, for all $s \in T^\circ
  \setminus \{s_0\}$, the sheaf $\sF_s(1)$ is an Ulrich bundle of rank $6$.
\end{proof}

\subsection{Fourfolds containing planes or cubic scrolls}
Now we turn our attention to the case of smooth cubic fourfolds $X$
containing a surface of degree up to three, other than linear sections.

The goal is to prove our main theorem from the introduction, in other
words, we would like to extend Theorem \ref{theorem:no_plane} to these fourfolds.
Note that Steps \ref{step:cohomology}, \ref{step:deformation} and \ref{step:hull} of the proof of Theorem
\ref{theorem:no_plane} are still valid.
Also, the argument of Step \ref{step:ulrich} holds once
Step \ref{step:extension} is established. 
Summing up, it remains to work out Step
\ref{step:extension}. Recalling the base scheme $T_2$ introduced in Step
\ref{step:hull}, we are done as soon as we prove the
following result.

\begin{prop} \label{a linear section}
  There is a Zariski-open neighborhood of $s_0$ in $T_2^\circ$ such that,
  for all $s \in T_2^\circ$, the sheaf $\sQ_s=\sF_s^{\vee \vee}/\sF_s$
  is either zero or it is supported on a linear section surface of $X$.
\end{prop}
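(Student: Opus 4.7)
The plan is to show that in a Zariski-open neighborhood of $s_0$ in $T_2$, the evaluation map $3\sO_X \to \sQ_s$ on global sections deforms the evaluation at $s_0$, so that its kernel remains a Lehn-Lehn-Sorger-van Straten sheaf corresponding to a Cohen-Macaulay twisted cubic. The support $Y_s = \Supp(\sQ_s)$ will then be forced to coincide with the linear section of $X$ spanned by this twisted cubic.

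To implement this, I would first use the flatness of $\sF$ over $T_2$ combined with the upper-semicontinuity estimates of Step \ref{step:hull} to show that, after shrinking $T_2$ around $s_0$, the sheaf $\sQ_s$ (when nonzero) has Hilbert polynomial equal to $\chi(\sO_Y(D+tH_X)) = \tfrac{3}{2}(t+1)(t+2)$; in particular $\rh^0(\sQ_s) = 3$ and $\sQ_s$ has rank $1$ on a support of degree $3$. Since the evaluation $3\sO_X \to \sQ_{s_0}$ is surjective, Nakayama's lemma yields surjectivity of $3\sO_X \epi \sQ_s$ in a neighborhood of $s_0$, and the kernel $\sK_s$ then forms a flat family with $\sK_{s_0} \simeq \sG_{D^\rt}$.

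Next, I would analyze $\sK_s$ through the moduli-theoretic description of the Lehn-Lehn-Sorger-van Straten eightfold $\sZ$. The sheaf $\sG_{D^\rt}$ is stable in $\Ku(X)$ and represents a point of the Zariski-open subset $\sZ^\circ \subset \sZ$, which parameterizes LLSS sheaves $\sG_C$ associated with Cohen-Macaulay twisted cubics $C \subset X$ whose linear span is a $\PP^3$. Since $\sZ$ is smooth (hyperk\"ahler), stability is open in the K3 category $\Ku(X)$, and $\sZ^\circ$ is Zariski-open in $\sZ$, a further shrinking of $T_2$ to some $T_2^\circ$ ensures that $\sK_s \simeq \sG_{C_s^\rt}$ for some Cohen-Macaulay twisted cubic $C_s^\rt \subset X$. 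Consequently, $Y_s \simeq \langle C_s^\rt \rangle \cap X$ is a linear section of $X$, as required.

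The main obstacle lies in the first step: upper-semicontinuity alone gives only an upper bound on $\rh^2(\sQ_s(-t))$, whereas forcing the Hilbert polynomial of $\sQ_s$ to equal $\tfrac{3}{2}(t+1)(t+2)$ (and not merely to have bounded degree) requires exploiting the constancy of $\chi(\sF_s(t))$ from flatness of $\sF$ together with the controlled jumping behavior of the Chern classes of $\sF_s^{\vee\vee}$ in the component of the moduli of semistable sheaves containing $\sS$. Once this constancy is in hand, the LLSS moduli argument in the second step is comparatively clean, relying only on openness of stability in $\Ku(X)$, openness of the Cohen-Macaulay condition for twisted cubics, and the irreducibility of $\sZ$.
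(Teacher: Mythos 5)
Your proposal has a genuine gap at its foundation: the sheaves $\sQ_s=\sF_s^{\vee\vee}/\sF_s$ do not form a flat (or even coherent) family over $T_2$, because taking reflexive hulls does not commute with base change; indeed the whole point of the surrounding argument is that $\sQ_s=0$ for generic $s$ while $\sQ_{s_0}\ne 0$, so the Hilbert polynomial of $\sQ_s$ necessarily jumps. Consequently neither Nakayama's lemma (which needs a coherent sheaf on $T_2\times X$ restricting to the $\sQ_s$) nor any constancy-of-$\chi$ argument can be applied to $\sQ_s$. Flatness of $\sF$ gives constancy of $\chi(\sF_s(t))$, but $\chi(\sQ_s(t))=\chi(\sF_s^{\vee\vee}(t))-\chi(\sF_s(t))$ and the first term is exactly as unknown as the second: the Chern classes of $\sF_s^{\vee\vee}$ can genuinely jump, e.g.\ by the class of a plane or a quadric when $X$ contains one --- and fourfolds containing such surfaces are precisely the case this proposition must handle, so no Chern-class or degree argument on a very general $X$ is available. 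Semicontinuity only yields the upper bound \eqref{boundchi}, not the equality $\chi(\hat\sQ_s(t))=\tfrac32(t+1)(t+2)$ you need. Moreover, even if you could pin the leading coefficient to $3/2$, that only forces $\deg(\Supp\sQ_s)=3$, which does not exclude a smooth cubic scroll or a cone over a rational normal cubic; these non-linear-section degree-$3$ surfaces are exactly what must be ruled out, and your argument never engages with them. Your second step (openness of stability and of $\sZ^\circ$) is fine in spirit but presupposes the first.

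The paper's proof takes a different and essentially unavoidable route: it works with the flat family $\sF$ rather than with the non-flat $\sQ_s$. It first classifies all integral surfaces of degree at most $3$ in $X$ other than linear sections and shows they are parametrized by a \emph{projective} Hilbert scheme $\sH$ whose components are projective planes (Claim \ref{itsP2}). It then encodes the condition ``$\Supp(\sQ_s)$ shares a component with $Z_h$'' as the non-vanishing of $\rH^2(\sext^1_X(\sF_s(t),\sO_{Z_h}))$ for $t\gg 0$ (Claim \ref{component}) --- a condition expressed through $\sF_s$, not $\sQ_s$ --- and globalizes it into a relative sheaf $\sP$ on $T_2\times\sH$ (Claim \ref{relative ext}). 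Since $\sP$ vanishes on $\{s_0\}\times\sH$ and $\sH$ is proper, the image in $T_2$ of $\Supp(\sP)$ is closed and misses $s_0$, giving the desired neighborhood. You would need some analogue of this properness-over-$\sH$ mechanism (or another way to exclude planes, quadrics and cubic scrolls from $\Supp(\sQ_s)$) to make your approach work.
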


\begin{proof}
We proved in Step \ref{step:hull} that, for $s \in
T_2$, the sheaf $\sQ_s$ is zero or it is
a locally Cohen-Macaulay sheaf supported on a projective surface $Y_s\subset
X$ with $\deg(Y_s) \le 3$. Assuming $\sQ_s \ne 0$, we would like to
show that $Y_s$ does not contain any surface $Z$ other than linear
sections of $X$.
Passing to the purely two-dimensional part of the reduced structure of
each primary component of $Z$, we may assume without loss of
generality that $Z$ is integral, still of degree at most $3$ and not
a linear section: we must then seek a contradiction. 
The surface $Z$ is either a plane,
or a quadric surface, or a smooth cubic scroll, or a cone over a
rational normal cubic. The Hilbert polynomial of $\sO_Z$ is thus either
$r_1=(t+1)(t+2)/2$, $r_2=(t+1)^2$, or $r_3=(t+1)(3t+2)/2$, and $Z$ is locally a complete intersection in any case.

\smallskip
We denote by $\sH$ union of primary components of $\Hilb_r(X)$
containing integral subschemes $Z\subset X$ having Hilbert polynomial
$r$, with $r \in \{r_1, r_2,r_3\}$. 
Note that $\Hilb_{r_1}(X)$ is a finite reduced
scheme consisting of planes contained in $X$. For $r=r_2$ or $r=r_3$, a priori a surface in
$\Hilb_r(X)$ might be badly singular. However, we have the 
following claim. 

\begin{claim} \label{itsP2}
Each surface of $\Hilb_{r_2}(X)$ is a reduced quadric. For $r=r_3$, all the surfaces of $\sH$ are purely $2$-dimensional Cohen-Macaulay. For $r=r_2$ or $r=r_3$, each component of $\sH$ is a projective plane.
\end{claim}

\begin{proof}
  Take a surface $Z=Z_h$ in $\Hilb_{r_2}(X)$. If $Z$ is reduced,
  then $Z$ is a quadric. Otherwise, the reduced structure of a
  component of $Z$ must be a plane $L \subset X$. By computing 
  the Hilbert polynomial of $\sI_{L/Z}$, we see that this sheaf must be
  supported on a plane $L' \subset Z$ and have rank one over
  $L'$. 
  Hence its $\sO_{L'}$-torsion-free part is of the form
  $\sI_{B/L'}(b)$ for a subscheme $B \subset L'$ and some $b \in \ZZ$.
  Note that $\sI_{L/Z} \simeq \sI_{L/X}/\sI_{Z/X}$, so the surjection
  $3\sO_X(-1) \to \sI_{L/X}$ induces an epimorphism $3\sO_{L'}(-1) \to
  \sI_{B/L'}(b)$, whence $b \in \{-1,0\}$.

  Computing Hilbert polynomials and arguing that the leading term of the Hilbert polynomial of the possible
  $L'$-torsion part of $\sI_{L/Z}$ must be non-negative, we see that
  actually $b=-1$. This in turn implies $B=\emptyset$, i.e. $\sI_{L/Z}
  \simeq \sO_{L'}(-1)$.
  This says that $Z$ is a quadric surface. A direct computation shows
  that $Z$ must be reduced, for a cubic fourfold containing a
  non-reduced quadric surface is singular at least along a subscheme
  of length 4.
  
  All surfaces of a component of $\Hilb_{r_2}(X)$ are residual to the
  same plane in $X$ so each component of $\Hilb_{r_2}(X)$ is the
  projective plane of linear sections of $X$ containing a given plane.

  \medskip
  Now assume $r=r_3$ and let $Z=Z_h \subset X$ be an integral surface,
  so that $Z$ is a smooth cubic scroll or a cone over a rational
  normal cubic. We work roughly like in Proposition \ref{syzygy of
    twisted}. The linear span $\spanY$ of $Z$ is a $\PP^4$ that cuts $X$ along a cubic threefold $W$ and $\sI_{Z/W}(2)$ is an Ulrich sheaf of rank $1$ over $W$ so we have a presentation:
  \begin{equation}
    \label{againM}
    0 \to 3\sO_{\spanY}(-1) \xr{M} 3\sO_{\spanY} \to \sI_{Z/W}(2) \to 0.    
  \end{equation}

  Note that the threefold $W$ can have only finitely many singular
  points as if $W$ had a $1$-dimensional family of singular points
  then $X$ would be singular along the intersection of this family
  and a quadric in $V$.

  The idea is to prove that, on one hand, denoting by $\sN_{Z/X}$ the normal sheaf of $Z$ in $X$, we
  have $\rh^0(\sN_{Z/X})=2$. On the other hand, inspired by \cite[\S 4.1.2]{Has00}, we describe an explicit
  projective plane parametrizing elements of $\sH$ by proving that
  each global section of $\sI_{Z/W}(2)$ gives an element of $\sH$ and
  that all elements obtained this way are Cohen-Macaulay and indeed
  contained in $W$.
  
  Let us first accomplish the second task. By \eqref{againM}, the
  projectivization $P=\PP(\sI_{Z/W}(2))$ is embedded into $V \times 
  \PP^2 = \PP(3\sO_V)$ and the subscheme $P$ is
  cut in $V \times \PP^2$ by $3$ linear equations defined by the
  columns of $M$. Write $\pi$ and $\sigma$ for the projections to
  $V$ and $\PP^2$ from $V \times \PP^2$ and by $\fh$, $\fl$ the
  pull-back to $V \times \PP^2$ of the hyperplane divisors of $V$ and $\PP^2$ via $\pi$ and $\sigma$. Use the same letters upon
  restriction to $P$. From the Koszul resolution we obtain:
  \[
    0 \to \sO_{\PP^2 \times V}(-2\fh-3\fl) \to 3\sO_{\PP^2 \times
      V}(-\fh-2\fl) \to 3\sO_{\PP^2 \times V}(-\fl) \to
    \sO_{\PP^2 \times V}(\fh) \to \sO_P(\fh) \to 0.
  \]
  Taking $\sigma_*$, we get that the sheaf
  $\sV=\sigma_*(\sO_P(\fh))$ fits into:
  \[
    0 \to 3\sO_{\PP^2}(-1) \xr{N} 5\sO_{\PP^2} \to \sV \to 0.
  \]

  Observe that $\rH^0(\sO_{\PP^2}(1))$ is naturally identified with
  $\rH^0(\sI_{Z/W}(2))$. The choice of a line $\ell \subset
  \PP^2$ corresponds uniquely to surjection $\ell :
  \rH^0(\sO_{\PP^2}(1)) \epi 2\kk$ and thus
  to an epimorphism $3\sO_V \to
  2\sO_{\spanY}$. Composing with $M$, the line $\ell$ gives 
  uniquely a matrix $M_\ell : 3\sO_{\spanY}(-1) \to 2\sO_{\spanY}$. 
  
  We have $\PP(\sV) \simeq P$. Note that the map $\pi : P \to W$ is birational since $\sI_{Z/W}(2)$ has rank $1$ over $W$ and $\sO_W$ has the same Hilbert polynomial as $\sO_P(\fh)$. Therefore, $P$ is
  irreducible and thus $\sV$ is torsion-free.
  In particular, for any line $\ell \subset \PP^2$, setting $N_\ell$ for the restriction
  of $N$ to $\ell$, we get that $N_\ell$ is injective and yields, by restriction of $\pi$ to $\ell$: 
  \[
    \pi_\ell : \PP(\sV|_\ell) \to Z_\ell \subset W,
  \]
  where $Z_\ell=\im(\pi_\ell)$ is a surface in $W$.
  The scheme $\PP(\sV|_\ell)$ is equipped with two divisor
  classes inherited from $P$, which we still denote by $\fl$ and $\fh$.
  The surface $Z_\ell$ is the image of $\PP(\sV|_\ell)$ by
  the linear system $|\sO_{\PP(\sV|_\ell)}(\fh)|$.

  Now $\sV|_\ell  \simeq \coker(N_\ell)$ is
 of the form $\sO_\ell(a_1) \oplus \sO_\ell(a_2) \oplus \sB$, where $0
  \le a_1 \le a_2 \le 3$, and $\sB$ is a torsion sheaf on $\ell$ of
  length $b$, with $a_1+a_2+b=3$.
	{The direct sum decomposition of $\coker(N_\ell)$ corresponds to a decomposition of $N_\ell$ into block-diagonal matrices.
	In turn, each block is classified by Kronecker-Weierstrass theory, see for instance according to \cite[Chapter 19.1]{burgisser-clausen-shokrollahi}.
	In view of this, once chosen homogeneous coordinates $(y_0:y_1)$ on
  $\ell$, a block of $N_\ell$ corresponding to $\sO_\ell(a)$ for some $a \ge 1$, after transposition, takes the form of the following matrix with $a$ rows and $a+1$ columns :
	\[
	      \begin{pmatrix}
					y_0 & y_1 & 0& \cdots \\
					0 & y_0 & y_1 & 0 \\
					\vdots & \ddots & \ddots & \ddots & 0\\
					0 & \cdots & 0 & y_0 & y_1
      \end{pmatrix}.
	\]
	On the other hand, $\sB$ is presented as cokernel of an injective matrix of size $b$, where $b$ is the length of $\sB$ 
	and the rank of the matrix drops at the points of the support of $\sB$.
	Up to choosing the coordinates $(y_0:y_1)$ suitably, since $b$ is at most $3$, we may assume that such support is contained in set-theoretically in 
	$\{(0:1),(1:0),(1:1)\}$. Also, non-reduced points in the support of $\sB$ are described in terms of Jordan blocks of $N_\ell$. 
	However, the presence of non-reduced points in the support of $\sB$ is incompatible with the smoothness of $X$. 
	Indeed the matrices $N_\ell$ would take one of the following normal forms, for $b=3$ :
    \[
      N_\ell^\rt = 
      \begin{pmatrix}
        y_0 & 0 & 0 & 0 & 0 \\
        0 & y_0 & 0 & 0 & 0   \\
        0 & 0 & y_0 & 0 & 0
      \end{pmatrix}, \qquad
	      N_\ell^\rt = 
      \begin{pmatrix}
        y_0 & y_1 & 0 & 0 & 0 \\
        0 & y_0 & 0 & 0 & 0   \\
        0 & 0 & y_0 & 0 & 0
      \end{pmatrix}, \qquad
			      N_\ell^\rt = 
      \begin{pmatrix}
        y_0 & y_1 & 0 & 0 & 0 \\
        0 & y_0 & y_1 & 0 & 0   \\
        0 & 0 & y_0 & 0 & 0
      \end{pmatrix},
    \]
		or for $b=2$ :
    \[
      N_\ell^\rt = 
      \begin{pmatrix}
        y_0 & y_1 & 0 & 0 & 0 \\
        0 & 0 & y_0 & 0 & 0    \\
        0 & 0 & 0 & y_0 & 0
      \end{pmatrix}, \qquad
      N_\ell^\rt = 
      \begin{pmatrix}
        y_0 & y_1 & 0 & 0 & 0 \\
        0 & 0 & y_0 & y_1 & 0    \\
        0 & 0 & 0 & y_0 & 0
      \end{pmatrix}.			
	\]
	One checks that any cubic fourfold $X$ containing a surface arising from a matrix of this form has at least one singular
	point. Indeed, the first, second and fourth cases above do not
        even give rise to surfaces in $\PP^5$, while in the third and the
        fifth cases the surface contains respectively a triple plane
        or a double plane and this forces $X$ to be singular.

	Summing up, after removing the cases forbidden by the smoothness of $X$, 
  in a
  suitable basis of $\rH^0(\sV)$ and $\rH^1(\sV(-1))$ and coordinates $(y_0:y_1)$ on
  $\ell$, the possibilities for $N_\ell$ are:}
  \begin{itemize}
  \item $(a_1,a_2,b)=(1,2,0)$: $Z_\ell$ is a smooth cubic scroll and:
    \[
      N_\ell^\rt = 
      \begin{pmatrix}
        y_0 & y_1 & 0 & 0 & 0 \\
        0 & 0 & y_0 & y_1 & 0   \\
        0 & 0 & 0 & y_0 & y_1
      \end{pmatrix}.
    \]
  \item $(a_1,a_2,b)=(0,3,0)$: $Z_\ell$ is a cone over a rational normal
    cubic curve and:
        \[
          N_\ell^\rt = 
          \begin{pmatrix}
            y_0 & y_1 & 0 & 0 & 0 \\
            0 & y_0 & y_1 & 0 & 0   \\
            0 & 0 & y_0 & y_1 & 0 
          \end{pmatrix}.
        \]
  \item $(a_1,a_2,b)=(1,1,1)$: $Z_\ell$ is the union of a $\PP^2$ and a
    smooth quadric meeting along a line.
        \[
          N_\ell^\rt = 
          \begin{pmatrix}
            y_0 & y_1 & 0 & 0 & 0 \\
            0 & 0 & y_0 & y_1 & 0    \\
            0 & 0 & 0 & 0 & y_0 
          \end{pmatrix}.
        \]
  \item $(a_1,a_2,b)=(0,2,1)$: $Z_\ell$ is the cone over the union of a
    smooth conic and a line meeting at a single point, spanning a
    $\PP^3 \subset V$ and having apex at a point outside $V$.
    \[
      N_\ell^\rt = 
      \begin{pmatrix}
        y_0 & y_1 & 0 & 0 & 0 \\
        0 & y_0 & y_1 & 0 & 0   \\
        0 & 0 & 0 & y_0 & 0 
      \end{pmatrix}.
    \]
  \item $(a_1,a_2,b)=(0,1,2)$: $Z_\ell$ is the cone over the union of a
    line and reducible conic, meeting at a length-two subscheme of the
    line, spanning a
    $\PP^3 \subset V$. The apex of the cone is a point outside $V$.
    \[
      N_\ell^\rt = 
      \begin{pmatrix}
        y_0 & y_1 & 0 & 0 & 0 \\
        0 & 0 & y_0 & 0 & 0    \\
        0 & 0 & 0 & y_1 & 0 
      \end{pmatrix}.
    \]
  \item $(a_1,a_2,b)=(0,0,3)$: $Z_\ell$ is a cone over a non-colinear
    subscheme of length $3$ in $\PP^2 \subset V$, having a skew $\PP^1
    \subset V$ as apex.
    \[
      N_\ell^\rt = 
      \begin{pmatrix}
        y_0 & 0 & 0 & 0 & 0 \\
        0 & y_1 & 0 & 0 & 0    \\
        0 & 0 & y_0-y_1 & 0 & 0 
      \end{pmatrix}.
    \]
  \end{itemize}

  In all these cases the resulting subscheme $Z_\ell$ lies in $\sH$
  and has projective dimension $2$ with a Hilbert-Burch resolution
  given by $M_\ell^\rt$.
  Then the dual plane parametrizing lines $\ell
  \subset \PP^2$ describes an explicit projective plane of
  arithmetically Cohen-Macaulay surfaces in $\sH$.

  \medskip
  Finally we have to show that $\rh^0(\sN_{Z/X})=2$. We have an exact
  sequence:
  \[
    0 \to \sO_X(-1) \to \sI_{Z/X} \to \sI_{Z/W} \to 0.
  \]
  Applying $\shom_X(-,\sO_Z)$ we get:
  \[
    0 \to \sN_{Z/W} \to \sN_{Z/X} \to \sO_Z(1) \xr{\delta} \sext_X^1(\sI_{Z/W},\sO_Z) \to \sext_X^1(\sI_{Z/X},\sO_Z) \to 0.
  \]

  Since the surfaces $Z$ under consideration are locally complete
  intersection in $X$, we get that $\sext_X^1(\sI_{Z/X},\sO_Z) \simeq
  \sext_X^2(\sO_Z,\sO_Z)$ is the determinant of the normal bundle
  $\sN_{Z/X}$ and is thus identified with the line bundle $\sN_{Z/W}(1)$.
  On the other hand, using \eqref{againM} we see that the sheaf
  $\sext_X^1(\sI_{Z/W},\sO_Z)$ fails to be locally free of rank $1$ at
  the subscheme $\Upsilon \subset W$ defined by the $2$-minors of $M$. Since
  $\Upsilon$ is contained in (though sometimes not equal to) the singular locus of $W$, we have
  $\dim(\Upsilon) = 0$ so the resolution of $\Upsilon$ is
  obtained by the Gulliksen-Negard complex {(see \cite{GN72})}:
  \[
    0 \to \sO_V(-6) \to 9\sO_V(-4) \to 16\sO_V(-3) \to 9\sO_V(-2) \to \sI_{\Upsilon/V} \to 0.
  \]

  Thus $\Upsilon$ has length $6$ and $\rH^0(\sI_{\Upsilon/V}(1))=0$, which
  in turn implies $\rH^0(\sI_{\Upsilon/Z}(1))=0$.
  Therefore, $\ker(\delta)\subset \sI_{\Upsilon/Z}(1)$ gives
  $\rH^0(\ker(\delta))=0$. In turn we get $\rH^0(\sN_{Z/X}) \simeq
  \rH^0(\sN_{Z/W})$ so it only remains to show
  $\rh^0(\sN_{Z/W})=2$. To get this, since $Z$ and $W$ are locally complete
  intersection, we may use adjunction to the effect that $\sN_{Z/W}
  \simeq \shom_W( \omega_W,\omega_Z)/\sO_W$. Therefore, using
  $\omega_W \simeq \sO_W(-2)$ and restricting
  \eqref{againM} to $W$ we get:
  \[
    0 \to \sI_{Z/W}(-1) \to 3\sO_W(-1) \to 2\sO_W \to \sN_{Z/W} \to 0.
  \]
  Taking cohomology we obtain $\rh^0(\sN_{Z/W})=2$ as desired.
\end{proof}

Write $\sZ \subset X \times \sH$ for the tautological surface. For each
point $h \in \sH$, we denote by $Z_h=\sZ \cap X \times \{h\}$ the
corresponding surface. Consider $\sX = X \times T_2 \times \sH$ and write
$\pi_{1,2}$, $\pi_{1,3}$ and $\pi_{2,3}$ for the projections from
$\sX$ onto $X
\times T_2$, $X \times \sH$ and $T_2 \times \sH$.
We have the following claim.

\begin{claim} \label{component}
  For any $(s,h) \in T_2 \times \sH$, the surfaces
  $Z_h$ and $Y_s$ share a component if and only if:
  \[
    \rH^2(\sext_X^1(\sF_s(t),\sO_{Z_h})) \ne 0, \qquad \mbox{for $t \gg 0$}.
  \]
\end{claim}

\begin{proof}
  Take $(s,h) \in T_2 \times \sH$ and set $Z=Z_h$.
  Since $\sF_s$ is torsion-free
  and $\sF_s^{\vee \vee}$ is reflexive, we have, for any coherent sheaf
  $\sB$ on $X$:
  \begin{equation}
    \label{van1}
    \sext^q_X(\sF_s^{\vee \vee},\sB)= \sext^{q+1}_X(\sF_s,\sB)=0
    \quad \mbox{for $q \ge 3$},
  \end{equation}
  and, for $q \in \{1,2\}$: 
  \begin{equation}
    \label{van2}
    \dim(\sext^{q}_X(\sF_s^{\vee \vee},\sB)) \le 2-q, \qquad 
    \dim(\sext^q_X(\sF_s,\sB)) \le 3-q.
  \end{equation}
  Indeed, this follows from  \cite[Proposition 1.1.10]{HL10} if $\sB$ is
  locally free. Then, \eqref{van1} and \eqref{van2} hold for an
  arbitrary coherent sheaf $\sB$ as we see by applying
  $\shom_X(\sF_s,-)$ and $\shom_X(\sF_s^{\vee \vee},-)$ to a finite
  locally resolution of $\sB$ and using that \eqref{van1} and \eqref{van2}
  hold for the terms of the resolution.

  Applying $\shom_X(-,\sO_{Z})$ to \eqref{Fs**} we get, for
  $q \ge 1$:
  \[
    \cdots \to \sext^q_X(\sF_s^{\vee \vee},\sO_{Z}) \to
    \sext^q_X(\sF_s,\sO_{Z}) \to
    \sext^{q+1}_X(\sQ_s,\sO_{Z}) \to 
    \sext^{q+1}_X(\sF_s^{\vee \vee},\sO_{Z}) \to \cdots
  \]
  We deduce that $\sext^q_X(\sF_s,\sO_Z)=0$ for $q \ge 3$ and
  \[
    \dim(\sext^1_X(\sF_s,\sO_{Z})) = 2 ~ \Longleftrightarrow ~
    \dim(\sext^2_X(\sQ_s,\sO_{Z})) = 2.
  \]
  Therefore
  \begin{equation}
    \label{3}
    \rH^p(\sext^1_X(\sF_s(t),\sO_{Z})) = 0 ~ \mbox{ for $p \ge 3$ and
      all $t \in \ZZ$},
  \end{equation}
  and
  \begin{equation}
    \label{dim2}
    \rH^2(\sext^1_X(\sF_s(t),\sO_{Z})) \ne 0 ~ \mbox{ for $t \gg 0$}
    ~ \Longleftrightarrow ~
    \dim(\sext^2_X(\sQ_s,\sO_{Z})) = 2.
  \end{equation}

By Claim \ref{itsP2}, we may assume that $Z$ is a locally
Cohen-Macaulay in $X$. 
{ So, since $\sO_Z$ is a locally Cohen-Macaulay $\sO_X$-module of
projective dimension $2$, by the Hilbert-Burch theorem, locally over
$X$ there exists a
matrix $M$ of size $p \times (p+1)$ whose $p$-minors
cut $Z$ as subscheme of $X$, namely we have a local presentation:
\[
  0 \to p\sO_X \to (p+1) \sO_X \to \sO_X \to \sO_Z \to 0.
\]
Applying $\shom_X(\sQ,-)$ to this sequence and using
$\sext_X^k(\sQ_s,\sO_X)=0$ for $k \ne 2$ we get that the sheaf $\sext_X^2(\sQ_s,\sO_{Z})$ is
locally presented as cokernel of the rightmost map in:
\begin{equation}
  \label{localpresentation}
  0 \to p\hat \sQ_s \xr{\shom(\hat \sQ_s,M)} (p+1) \hat \sQ_s
  \xr{\shom(\hat \sQ_s,\wedge^p M)} \hat \sQ_s
\end{equation}}

Now the $p$-minors of $M$ vanish on an irreducible component of $Y_s$
if and only if such component also lies in $Z$, in which case 
\eqref{localpresentation} shows that the support of $\sext_X^2(\sQ_s,\sO_{Z})$ is
the whole component.
Conversely, if $Y_s$ and $Z$ share no irreducible component so that
the $p$-minors do not vanish identically on any component of $Y_s$, then again by
\eqref{localpresentation} the sheaf $\sext_X^2(\sQ_s,\sO_{Z})$ is
supported along a closed subset of $Z$ having
dimension at most $1$. 
This shows that $\dim(\sext_X^2(\sQ_s,\sO_{Z}))=2$ if and only if
$\sQ_s$ and $Z$ share a common component. Together with \eqref{dim2},
this finishes the proof.
\end{proof}

\begin{claim} \label{relative ext}
  For $t \in \ZZ$, put
  $\sB = \sext^1_\sX(\pi_{12}^*(\sF(-t)),\pi_{13}^*(\sO_\sZ))$
  and $\sP=\rR^2\pi_{23 *}(\sB)$.
  For $(s,h) \in T_2 \times \sH$, we have $\sP_{(s,h)} \ne 0$ for
  $t \gg 0$ if and
  only if $Z_h$ and $Y_s$ have a common component.
\end{claim}

\begin{proof}
  Since $\sF$ and $\sO_{\sZ}$ are flat over $T_2 \times \sH$, we have an identification $\sB_{(s,h)} \simeq  \sext^1_X(\sF_s(-t),\sO_{Z_h})$
  for all $t \in \ZZ$ and $(s,h)  \in T_2 \times \sH$.  
  By the vanishing results of the previous
  paragraph and using the flattening stratification for $\sB$ over $T_2 \times \sH$ and working over each stratum, we
  get $\rR^{p}  \pi_{23 *} (\sB)=0$ for all $p \ge 3$ and $t \in
  \ZZ$ so via base-change we obtain, for all $(s,h) \in T_2 \times
  \sH$, we have
  \[
    \sP_{(s,h)} \simeq \rR^2\pi_{23 *}(\sB)_{(s,h)} \simeq \rH^2
    \left(\sext^1_X(\sF_s(t),\sO_{Z_h})\right)
  \]  
  for all $t \in \ZZ$. The conclusion follows from Claim \ref{component}.
\end{proof}

Now we finish the proof of the proposition. {Indeed, by Claim
\ref{itsP2} for the
special point $s_0 \in T_2$, the surface $Y=Y_{s_0}$ shares no
component with any surface $Z_h$ for $h \in \sH$.
Indeed, since $Z_h$ and $Y$ are projective surfaces of
degree $3$ in $\PP^5$, if $Z_h$
contains $Y$ then $Z_h$ contains a further (possibly embedded) component of
dimension at most $1$, hence the surface  $Z_h$ would 
fail to be Cohen-Macaulay, a contradiction.}

Therefore, by Claim
\ref{relative ext} we have $\sP_{(s_0,h)}=0$ for all $h \in \sH$.
In other words, the support of $\sP$ is disjoint from $\{s_0\} \times
\sH$. Since $\sH$ is projective, the image of the support of $\sP$ in $T_2$
is thus a closed subset of $T_2$, disjoint from $s_0$.
Hence there exists an open neighborhood $T_2^\circ$ of $s_0$
disjoint from this subset.
Thus the support of
$\sP$ does not intersect $T_2^\circ \times \sH$. This implies that,
for all $s \in T_2^\circ$, if $\sQ_s$ is not zero then its support is
a surface $Y_s$ having degree at most $3$
and containing no surface of $\sH$ as a component, in other words
$Y_s$ must be a linear section of $X$. This completes the proof of
Proposition \ref{a linear section} and consequently of the main
theorem.
\end{proof}

\section{Stability of Ulrich bundles and the higher rank range} \label{k}

Our next goal is to prove Theorem \ref{main2}. We know that $\sE(1)$
deforms to a simple Ulrich bundle, and such Ulrich bundles constitute a family of dimension $26$.
We check that a sufficiently general deformation of $\sS$ is stable, which will
achieve the proof of the first statement.

\begin{lem}
  The sheaf $\sE(1)$ deforms flatly to a stable Ulrich bundle. 
\end{lem}

\begin{proof}
  We know that $\sE(1)$ deforms to an Ulrich vector bundle $\sU$.
  Also, $\sU$ is necessarily semistable and any element of the graded
  objects associated to a Jordan-H\"older filtration of $\sU$ must be
  a stable Ulrich bundle. Therefore it suffices to prove that, for a generic
  choice of $\sU$, we have $\Hom_X(\sU,\sB)=\Hom_X(\sB,\sU)=0$, where
  $\sB$ is an Ulrich bundle of rank $2$, and that $\sU$ is not the
  extension of two Ulrich bundles of rank $3$.

  If $X$ supports an Ulrich bundle $\sB$ of rank $2$, which is to say, if
  $X$ is a pfaffian cubic fourfold, then we have $c_2(\sB(-1))\cdot H_X^2=2$ and
  $c_2(\sB(-1))^2=6$ so $\chi(\sB,\sB)=2$. Since $\sB(-1)$ is stable
  and lies in
  $\Ku(X)$, this implies that $\Ext^1_X(\sB,\sB)=0$, therefore there are
  finitely many Ulrich bundles $\sB$ of rank $2$ on $X$. For each of
  them and any twisted cubic $C \subset X$ we have
  $\Hom_X(\sG_C,\sB(-1))=0=\Hom_X(\sB(-1),\sG_C)$ for these sheaves are stable,
  not isomorphic, and share the same reduced Hilbert polynomial.
  Thus  $\rH^4(X,\sB^\vee \otimes \sE(-2))\simeq \Hom_X(\sE,\sB(-1))^\vee  =0$ and
  $\rH^0(X,\sB^\vee \otimes \sE(1))  \simeq \Hom_X(\sB(-1),\sE) =0$ by
  \eqref{sE}. Therefore, 
  $\Hom_X(\sU,\sB)=0=\Hom_X(\sB,\sU)$ for a sufficiently general choice
  of $\sU$ by semicontinuity of cohomology.

  Next, consider Ulrich bundles $\sA$ and $\sB$ of rank $3$ on $X$ and
  assume that $\sU$ contains $\sA$ with $\sB \simeq \sU/\sA$.
  Note that the degeneracy locus of two sufficiently general global
  sections of $\sA$ is a smooth surface $A \subset X$ with $A\cdot
  H_X^2=12$ and $A \cdot A = 54$, which implies that
  $\chi(\sA,\sA)=0$. Similarly we have $\chi(\sB,\sB)=0$ and the
  relation $\ch(\sU)=\ch(\sA)+\ch(\sB)$ gives $\chi(\sA,\sB)=\frac 12
  \chi(\sU,\sU)=12$. It follows that the family of Ulrich bundles
  admitting a Jordan-H\"older filtration with graded object $\sA \oplus
  \sB$ has dimension 15. Hence a sufficiently general choice of $\sU$
  is stable.

  \medskip
  To prove the second statement of Theorem \ref{main2}, set $\sU_1$
  for a stable Ulrich bundle of rank $6$ obtained as before and assume that
  $X$ is general enough so that it contains no surface which is not
  homologous to 
  $m H_X^2$ for any $m \in \NN$ and that it supports 
  a Cartan bundle $\sU_2$ arising from \cite{IM14}. We have
  {$\ch(\sU_i(-1))=(i+1)\gamma$ for $i \in \{1,2\}$ (cf. \cite[Section 2]{KS20} for the computation of Chern classes of Ulrich bundles on a (very) general cubic fourfold)}. Also, $\sU_1$ and $\sU_2$ are
  stable. Indeed they are semistable and may be only
  destabilized by Ulrich bundles, which is impossible, for Ulrich bundles must 
  have rank $3m$ for some $m \in \NN$, with $m \ge 2$, when $X$ is
  very general according to
  \cite{KS20}.

  Take $i$, $j$ distinct in $\{1,2\}$. From the observations above 
  we deduce
  $\Hom_X(\sU_i,\sU_j)=0$ so,
  since $\sU_1(-1)$ and $\sU_2(-1)$ lie in
  $\Ku(X)$ we get $\Ext^{\ell}_X(\sU_i,\sU_j)=0$ unless $\ell=1$. By Riemann-Roch we have
  $\ext^1_X(\sU_i,\sU_j)=36$.

  Consider an integer $k > 1$. We may find integers $k_1,k_2 \in \NN$ such
  that $k=2k_1+3k_2$.
  Then we construct a deformation $\sU$ of $k_1\sU_1 \oplus k_2 \sU_2$
  which is a stable Ulrich bundle.
  Indeed, we first consider a simple sheaf $\sU_0$ as an extension of
  $k_1\sU_1$ and $k_2 \sU_2$, which is possible for
  $\chi(k_1\sU_1,k_2\sU_2)=-36k_1k_2$.
  The sheaf $\sU_0$ has a smooth deformation space of dimension
  $6k^2+2$ and we take $\sU$ to be a generic element of this space.
  Now, the summands of the graded
  object arising from a Jordan-H\"older filtration of $\sU$ must be
   stable Ulrich bundles and thus, after twisting by $\sO_X(-1)$, they
   must have Chern character $h_1\gamma,
   \ldots, h_r\gamma$ for
   some $r,h_1,\ldots,h_r \in \NN^*$ with $h_1+\cdots+h_r=k$.
   But the dimension of the family of sheaves admitting such a filtration is:
   \[
     6\left(k^2-\sum_{1 \le i < j \le r} h_i h_j\right) + r+1 < 6k^2+2,
   \]
   as one can see by looking at the Jordan-H\"older filtration as $r$
   iterated extensions, the inequality being valid for all $r > 1$.

   This proves that for any $k > 1$ there is a stable Ulrich bundle
   $\sU$ on
   $X$ with $\ch(\sU(-1))=k\gamma$ which is a generic flat deformation
   of $k_1\sU_1 \oplus k_2 \sU_2$. The moduli space of stable sheaves
   $\rM_X(k\gamma)$ is smooth and symplectic at the points
   corresponding to these sheaves.
 \end{proof}

\newcommand{\etalchar}[1]{$^{#1}$}
\providecommand{\bysame}{\leavevmode\hbox to3em{\hrulefill}\thinspace}
\providecommand{\MR}{\relax\ifhmode\unskip\space\fi MR }
\providecommand{\MRhref}[2]{%
  \href{http://www.ams.org/mathscinet-getitem?mr=#1}{#2}
}
\providecommand{\href}[2]{#2}

\end{document}